\newtheorem{observation}{Observation}
\newtheorem{theorem}{Theorem}
\newtheorem{definition}{Definition}
\newtheorem{lemma}{Lemma}
\title{Spectral Outer-Approximation Algorithms for Binary Semidefinite Problems}
\newif\ifuniqueAffiliation
\author{ {Daniel de~Roux} \\
	Tepper School of Business\\
        Carnegie Mellon University\\
	\texttt{dderoux@andrew.cmu.edu} \\
	\And
	\href{https://orcid.org/0000-0001-6001-1738}{\includegraphics[scale=0.06]{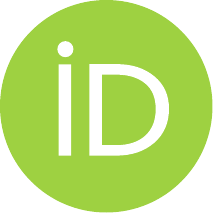}\hspace{1mm}Zedong~Peng} \\
	Davidson School of Chemical Engineering\\
        Purdue University\\
	\texttt{peng372@purdue.edu} \\
	\And
	\href{https://orcid.org/0000-0002-8308-5016}{\includegraphics[scale=0.06]{orcid.pdf}\hspace{1mm}David E.~Bernal Neira} \\
	Davidson School of Chemical Engineering\\
        Purdue University\\
	\texttt{dbernaln@purdue.edu} \\
}
\newbox{\orcid}\sbox{\orcid}{\includegraphics[scale=0.06]{orcid.pdf}} 
\author[1]{%
	\href{https://orcid.org/0000-0000-0000-0000}{\usebox{\orcid}\hspace{1mm}David S.~Hippocampus\thanks{\texttt{hippo@cs.cranberry-lemon.edu}}}%
}
\author[1,2]{%
	\href{https://orcid.org/0000-0000-0000-0000}{\usebox{\orcid}\hspace{1mm}Elias D.~Striatum\thanks{\texttt{stariate@ee.mount-sheikh.edu}}}%
}
\affil[1]{Department of Computer Science, Cranberry-Lemon University, Pittsburgh, PA 15213}
\affil[2]{Department of Electrical Engineering, Mount-Sheikh University, Santa Narimana, Levand}
\begin{document}
\maketitle

\begin{abstract}
Integer semidefinite programming (ISDP) has recently gained attention due to its connection to binary quadratically constrained quadratic programs (BQCQPs), which can be exactly reformulated as binary semidefinite programs (BSDPs).
However, it remains unclear whether this reformulation effectively uses existing ISDP solvers to address BQCQPs.
To the best of our knowledge, no specialized ISDP algorithms exploit the unique structure of BSDPs derived from BQCQPs.
This paper proposes a novel spectral outer approximation algorithm tailored for BSDPs derived from BQCQP reformulations. Our approach is inspired by polyhedral and second-order representable regions that outer approximate the feasible set of a semidefinite program relying on a spectral decomposition of a matrix that simultaneously diagonalizes the objective matrix and an aggregation of the constraint matrices.
Computational experiments show that our algorithm is competitive with, and in some cases outperforms, state-of-the-art ISDP solvers such as \texttt{SCIP-SDP} and \texttt{PAJARITO}, highlighting ISDP’s potential for solving BQCQPs.
\end{abstract}

\keywords{Binary Quadratic Programming \and Integer Semidefinite Programming \and Eigenvectors \and Outer-Approximation}

\section{Introduction}
\label{intro}

In recent years, integer semidefinite programming (ISDP) has attracted increasing attention from the optimization community.
ISDPs allow the modeling of mixed-integer non-linear programming (MINLP) optimization problems while leveraging the expressive power of the semidefinite cone.
In particular, semidefinite constraints can naturally capture non-differentiable and combinatorial structures, providing a richer modeling framework compared to traditional mixed-integer linear programs (MILPs).
Moreover, ISDPs enable algorithmic approaches based on conic convex optimization, such as branch-and-bound enhanced by interior-point methods, benefiting from the mature state of semidefinite solvers \cite{dur2021conic}.
This contrasts with global MINLP solvers, which often face numerical instability and struggle with non-differentiable objectives or constraints.

Despite these potential advantages, ISDPs have historically seen limited practical applications.
One reason is that semidefinite programming (SDP) itself emerged primarily as a convex relaxation technique—Shor's relaxation for binary quadratic problems \cite{shor1987quadratic}.
This makes the idea of directly imposing integer constraints within an SDP unusual and, until recently, of relatively limited interest.
Notable exceptions include applications such as truss topology optimization \cite{gally2018framework}, sparse principal component analysis \cite{ li2024exact}, clustering \cite{peng2007approximating,piccialli2022sos}, and the computation of restricted isometry constants \cite{gally2016computing}, but these remain niche compared to the broader world of mixed-integer nonlinear programming.

The relevance of ISDPs has been recently re-energized by the work of \citet{de2023integrality}, which showed that every binary quadratically constrained quadratic program (BQCQP) can be exactly reformulated as a binary semidefinite program (BSDP), a special case of ISDP.
This result bridges ISDPs with the extensive literature on BQCQPs, a central class of combinatorial optimization problems. BQCQPs arise in areas such as combinatorial optimization and computer science \cite{boros1991max,dur2021conic}, machine learning \cite{fortunato2010community,park2017general}, chemical engineering \cite{biegler2004retrospective}, and portfolio optimization \cite{bodnar2013equivalence,deng2013branch}.
Unconstrained binary quadratic problems (BQPs), a special subclass of BQCQPs, have also attracted the attention of the quantum optimization community \cite{domino2022quadratic,mazumder2024five}, with several quantum-inspired algorithms being developed \cite{brown2022copositive}.

This reformulation result motivates renewed attention to ISDPs—not just as abstract mathematical objects, but as computationally meaningful formulations for practically relevant problems.
Importantly, this paper does not claim that ISDP reformulation is the most efficient way to solve all BQCQPs. For some well-structured BQCQPs, such as the quadratic knapsack problem, specialized MIQP solvers like \texttt{Gurobi} often outperform ISDP-based approaches. However, the ISDP perspective is valuable in its own right: it enables the development of tailored ISDP solvers, which could ultimately benefit not only BQCQP-derived problems but also other ISDP applications.

Based on these observations, the natural question arises: how can we design specialized algorithms for ISDPs, particularly ones that exploit the spectral and combinatorial structure inherited from BQCQP reformulations? Addressing this question, this paper proposes a new spectral outer approximation algorithm tailored for ISDPs, with a focus on BSDPs derived from BQCQPs. At the same time, the algorithm is general enough to apply to broader classes of ISDPs beyond those arising from BQCQPs.

\paragraph{Contributions.}
This paper makes the following contributions:

\begin{itemize}
    \item We propose a novel spectral outer approximation algorithm designed for ISDPs, particularly exploiting the spectral structure present in BSDPs arising from BQCQP reformulations.
    \item We show through extensive computational experiments that our algorithm is competitive with, and in some cases outperforms, state-of-the-art ISDP solvers such as \texttt{SCIP-SDP} \cite{gally2018framework} and \texttt{PAJARITO} \cite{lubin2018polyhedral}.
    \item We highlight the growing relevance of ISDPs as a computational tool, contributing to the broader exploration of integer conic optimization.
\end{itemize}

\paragraph{Outline.}  
The remainder of this paper is organized as follows. Section \ref{sec:preliminaries} presents formal definitions of ISDPs and BSDPs, and reviews the BQCQP-to-BSDP reformulation. Section \ref{oa_sec:2} describes the general outer approximation algorithm for ISDPs. Section \ref{oa_sec:3} presents our spectral outer approximation approach. Section \ref{oa_sec:4} presents the computational problems we will  consider and Section \ref{oa_sec:5} provides computational results comparing our method with existing ISDP solvers. Section \ref{oa_sec:6} concludes with final remarks and future research directions.

\subsection{Notation}
 We denote the set of square, real,  $n\times n$ symmetric matrices by $\mathbb{S}^n$ and the set of positive semidefinite matrices by $\mathbb{S}^n_+$. By convention, we assume that semidefinite matrices are always symmetric. We denote the L\"{o}wner order of symmetric matrices by $\succeq$. Hence, the notation 
$X \succeq Y$ means $X-Y \in \mathbb{S}^n_+$. In particular, $X\succeq 0$ indicates that $X$ is positive semidefinite (PSD). For an integer $k \in \mathbb{N}$, $[n]$ denotes the set of natural numbers $\{1,\dots,k\}$.
We denote the cardinality of a set $I$ by $|I|$.
We denote by $e_1,\dots, e_n$ the standard basis of $\mathbb{R}^n$ and the $n\times n$ identity matrix by $I_n$. For a symmetric matrix $W$ we let $\lambda_1(W) \geq \lambda_2(W) \geq \dots \geq \lambda_n$(W) be its eigenvalues. When the matrix is clear from the context, we drop the terms in parentheses and write $\lambda_1\geq\dots\geq\lambda_n$. For $A \in \mathbb{S}^n$ we write $tr(A)$ for the trace of $A$: $tr(A) = \sum_{i=1}^nA_{ii}$. The $\ell_1$ norm of $A$ is given by $\|A\|_1 = \sum_{i,j}|A_{ij}|$. We denote by $\langle\cdot,\cdot\rangle$ the usual Frobenius inner product of two matrices in $\mathbb{S}^n$, recalling that for two matrices $A,B \in \mathbb{S}^n, \ \langle A,B\rangle = tr(A^TB)=tr(AB)$.
We denote by $\Vec{1}$ the vector of all ones in $\mathbb{R}^n$ and by $J$ the matrix of all ones. If $A$ is a matrix, we denote by $diag(A)$ the vector given by the diagonal of $A$. If $u$ is a vector, $diag(u)$ denotes the matrix with $u$ on its diagonal. We
denote by $\mathcal{E}(A)$ an arbitrary orthonormal basis consisting of eigenvectors of $A$. In particular, if $A \in \mathbb{S}^n$ and $\mathcal{E}(A) = \{v_1,\dots,v_n\}$ then we have $A = \sum_{i=1}^n \lambda_iv_iv_i^\top$ \cite{horn2012matrix}.

\section{Preliminaries}
\label{sec:preliminaries}

\subsection{Integer Semidefinite Programs}
An integer semidefinite program (ISDP) is an optimization problem of the form:
\begin{gather}\label{ISDP}\tag{ISDP}
\begin{aligned}
\min_{X \in \mathbb{S}^n }  & \  \langle  C,X\rangle \\
\text{ s.t. } & \langle A_i,X  \rangle = b_i  \quad \forall i \ \in [r],  \\
 & X \succeq 0, \\
& u_{ij} \leq X_{ij} \leq l_{ij} ,\  X_{ij}\in \mathbb{Z},   \quad (i,j)\in L\subseteq [n]\times[n],
\end{aligned}
\end{gather}
with $C,A_i \in \mathbb{S}^n$, $b_i \in \mathbb{R}$ for all $i \in [r]$.
$L\subseteq [n]\times[n]$ indicates the entries of $X$ which are constrained to be integer, and  $u_{ij},l_{ij}$ upper and lower bounds of $X_{ij}$ for $(i,j) \in L$. Notice that if we ignore the integrality constraints, the set of feasible solutions is convex and given by the equations
$ X \succeq 0, \ \langle A_i,X \rangle = b_i  \ \forall i \ \in [r]$.
A particularly important subclass is the binary semidefinite program (BSDP), where
$
X_{ij} \in \{0,1\}, \ \forall (i,j) \in L.
$

\subsection{Binary Quadratically Constrained Quadratic Programs (BQCQP)}
A BQCQP is an optimization problem of the form:
\begin{gather}\label{BQCQP}\tag{BQCQP}
\begin{aligned}
\min_{x \in \{0,1\}^n} & \  x^\top C x + 2d_0^\top x \\
\text{s.t.} & \ x^\top A_i x + 2d_i^\top x \leq b_i \quad \forall i \in [r], \\
& \ Dx = t,
\end{aligned}
\end{gather}
where $C, A_i \in \mathbb{S}^n$, $d_i \in \mathbb{R}^n$, $D \in \mathbb{R}^{q\times n}$, and $t \in \mathbb{R}^q$. Such problems capture a wide variety of practical applications, as discussed in Section \ref{intro}.

\subsection{Exact Reformulation as a Binary Semidefinite Program (BSDP)}
The key result from \cite{de2023integrality} is that any BQCQP can be exactly reformulated as binary semidefinite programs. Formally:

\begin{theorem}[Theorem 9 of \cite{de2023integrality}]
Let $C,A_i \in \mathbb{S}^n, d_0,d_i \in \mathbb{R}^n$, $b_i \in \mathbb{R}, \ \forall i \in [r]$ and $D \in \mathbb{R}^{q \times n }, t_i \in \mathbb{R}, \forall \ i \in [q]$, where $r,q \in \mathbb{N}$. The following semidefinite binary program is equivalent to \eqref{BQCQP}.
\begin{gather}\label{BSDP}\tag{BSDP}
\begin{aligned}
\min_{X, x } \ & \left \langle C,X \right \rangle + 2d_0^\top x \\
\text{s.t.} \ & \langle A_i,X \rangle + d_i^\top x \leq b_i \quad  \ i \in [r], \\
& D x  = t, \\
& X-xx^\top \succeq 0, \\
& Diag(X)=x, \\
& x \in \{0,1\}^n.
\end{aligned}
\end{gather}
    
\end{theorem}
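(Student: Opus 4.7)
The plan is to prove the equivalence by exhibiting a bijection (up to objective value) between feasible points of \eqref{BQCQP} and of \eqref{BSDP}, in each direction. The key identity driving everything is that for a binary vector $x \in \{0,1\}^n$, the rank-one lift $X := xx^\top$ satisfies $\mathrm{diag}(X) = x$ because $x_i^2 = x_i$, and this is the only way to make the PSD constraint $X - xx^\top \succeq 0$ tight on its diagonal.

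\textbf{Forward direction.} Given a feasible $x$ for \eqref{BQCQP}, I would define $X := xx^\top$ and verify the BSDP constraints. Since $X - xx^\top = 0 \succeq 0$, the semidefinite constraint holds. The diagonal condition $\mathrm{diag}(X) = x$ holds by the binary identity $x_i^2 = x_i$. For the Frobenius inner products, $\langle C, X\rangle = \langle C, xx^\top\rangle = \mathrm{tr}(C xx^\top) = x^\top C x$ and similarly $\langle A_i, X\rangle = x^\top A_i x$, so the objective and the $r$ quadratic constraints translate into their linearized BSDP counterparts. The equality $Dx = t$ and binarity of $x$ are carried over unchanged.

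\textbf{Backward direction.} Given $(X,x)$ feasible for \eqref{BSDP}, the crux is to show $X = xx^\top$; once this is established, reversing the Frobenius-inner-product computations from the forward direction shows that $x$ is \eqref{BQCQP}-feasible with the same objective value. To get $X = xx^\top$, set $M := X - xx^\top$. By hypothesis $M \succeq 0$. The diagonal is
\[
M_{ii} = X_{ii} - x_i^2 = x_i - x_i^2 = 0,
\]
using $\mathrm{diag}(X) = x$ and $x_i \in \{0,1\}$. The main technical step, which I would isolate as a small lemma, is that a PSD matrix with vanishing diagonal is the zero matrix: for any $i \neq j$ the $2\times 2$ principal submatrix $\bigl(\begin{smallmatrix} 0 & M_{ij} \\ M_{ij} & 0 \end{smallmatrix}\bigr)$ inherits PSD-ness, forcing its determinant $-M_{ij}^2$ to be nonnegative, hence $M_{ij}=0$. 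This gives $M = 0$, i.e., $X = xx^\top$, exactly as needed.

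\textbf{Where I expect the work to lie.} The forward direction is essentially bookkeeping. The potential pitfall is the backward direction: one has to be careful that the PSD inequality $X - xx^\top \succeq 0$ is genuinely enforced (as opposed to only its Schur-complement form), since the ``zero diagonal forces zero matrix'' argument is the only mechanism collapsing $X$ onto the rank-one lift. Once that lemma is in hand, the rest of the equivalence—matching of objective values and of the $r$ inequality constraints under $X = xx^\top$, and the fact that the linear constraint $Dx = t$ and the binarity on $x$ appear identically in both formulations—is immediate, and the two feasible sets are in value-preserving bijection, yielding equality of optimal values.
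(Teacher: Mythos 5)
Your argument is correct and is the standard proof of this equivalence: the forward direction is the rank-one lift $X=xx^\top$, and the backward direction collapses any feasible $(X,x)$ onto that lift via the lemma that a PSD matrix with zero diagonal is zero, which is exactly the mechanism behind the cited result (the paper itself imports Theorem 9 of \cite{de2023integrality} without reproducing a proof, so there is no in-paper argument to diverge from). One small point you glossed over: as transcribed in the statement, the linear term in the BSDP inequality constraints reads $d_i^\top x$ while \eqref{BQCQP} has $2d_i^\top x$ (the objective keeps the factor $2$); this is a transcription inconsistency in the statement rather than a gap in your reasoning, but a careful write-up should either carry the factor $2$ through or absorb it into $d_i$.
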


The constraint $X-xx^\top \succeq 0$ is usually written as the equivalent constraint $\begin{bmatrix} X &  & x^\top  \\ x & & 1\end{bmatrix} \succeq  0$.

\section{Outer approximation algorithms for Integer semidefinite problems}\label{oa_sec:2}

In this section, we introduce an outer approximation algorithm for integer semidefinite optimization, as outlined in Algorithm \ref{alg_pajarito}. This algorithm is introduced in the more general setting of mixed-integer conic optimization by
 \citet{lubin2018polyhedral}. A polyhedral outer approximation of the set of positive semidefinite matrices $\mathbb{S}^n$ is defined as follows.

\begin{definition}
    A polyhedron $P$ is an outer approximation of $\mathbb{S}^n$ if $P$ equals the intersection of a finite number of half-spaces and contains $\mathbb{S}^n$.
\end{definition}

Outer approximations of the semidefinite cone are obtained by fixing a finite set of positive semidefinite matrices since $\mathbb{S}^n_+$ is self-dual. That is, a polyhedral set $P = \{X \in \mathbb{S}^n : \left \langle T_i, X \right \rangle \geq 0 ,\  i \in [q] \}$, where $T_1,\dots, T_q \in \mathbb{S}^n_+$ is an outer approximation of $\{X \in \mathbb{S}^n : X \succeq 0\}$. Because we will need to keep track of the matrices $T_i$, we set $\mathcal{T}:= \{T_1,\dots,T_q\}$ and let

\[
P_{\mathcal{T}} :=  \{X \in \mathbb{S}^n : \left \langle T, X \right \rangle \geq 0  \ \forall \ T \in \mathcal{T}\}.
\]

Consider a problem of the form of \eqref{ISDP}. Let $P_{\mathcal{T}}$ be a polyhedral outer approximation of $\mathbb{S}^n_+$. Then, the problem
\begin{gather}\label{oasdp}\tag{OASDP($P_{\mathcal{T}}$)}
\begin{aligned}
\min_{X \in \mathbb{S}^n }  \ & \langle  C,X\rangle \\
\text{ s.t. }\ & \langle A_i,X \rangle = b_i \quad \forall i \ \in [r], \\ 
&  X \in P_{\mathcal{T}}, \\
&  s_{ij} \leq X_{ij} \leq l_{ij} , \ X_{ij}\in \mathbb{Z} \quad (i,j)\in L\subseteq [n]\times[n]
\end{aligned}
\end{gather}
is called the linear outer approximation problem of \eqref{ISDP}. Notice that this problem is a mixed-integer linear problem. Since any feasible solution to \eqref{ISDP} is feasible to \eqref{oasdp} as well, this latter problem is a relaxation of the former, and the optimal value of \eqref{oasdp} provides a lower bound to \eqref{ISDP}.

Notice that if we fix the entries of $X$ to have a specific integer value in program \eqref{ISDP}, the program reduces to a semidefinite optimization problem. More formally,
Let $X^L$ be a matrix with $X^L_{ij} \in \mathbb{Z}$ and $u_{ij}\leq X^L_{ij} \leq l_{ij}$ for all $(i,j) \in L \subseteq [n]\times [n]$. Consider the following optimization program
\begin{gather}\label{inner_sdp}\tag{SDP($X^L$)}
\begin{aligned}
\min_{X \in \mathbb{S}^n }  & \  \langle  C,X\rangle \\
\text{s.t. } & \langle A_i,X \rangle = b_i  \quad \forall i \ \in [r],  \\
& \ X \succeq 0, \\ 
& X_{ij} = X^L_{ij} \quad \forall (i,j) \in L.
\end{aligned}
\end{gather}

Since we have fixed the integer coordinates to take a specific value, this program is a (convex) positive semidefinite optimization problem. Furthermore, if $X'$ is an optimal solution to this problem, it is feasible for \eqref{ISDP} and therefore provides a corresponding upper bound. 

In summary, the outer approximation algorithm for \eqref{ISDP} first computes an optimal solution $\Hat{X}$ to \eqref{oasdp}. This solution necessarily has integer values in the entries specified by $L$, and the algorithm proceeds to solve problem $SDP(\Hat{X})$. That is problem \eqref{inner_sdp} where the entries in $L$ match the entries of $\Hat{X}$ in $L$. If all problems are solvable and $X^*$ denotes an optimal solution of \eqref{ISDP}, then the following key inequalities hold

\begin{equation}
\left \langle C, \Hat{X} \right \rangle \leq \left \langle C, X^* \right \rangle \leq \left \langle C, X' \right \rangle.
\end{equation}

If the values $\left \langle C, \Hat{X} \right \rangle$ and 
$\left \langle C, X' \right \rangle$ are equal, then $X'$ is optimal for \eqref{ISDP} and the algorithm terminates. If not, the algorithm proceeds by updating the outer approximation $P_\mathcal{T}$ to a tighter approximation by the addition of valid linear constraints, and \eqref{oasdp} is resolved. This process yields a non-decreasing sequence of lower bounds. We give the full algorithm steps in \ref{alg_pajarito}. We point out that since the number of possible integer assignments for the coordinates in $L$ is finite, Algorithm \ref{alg_pajarito} is guaranteed to terminate in a finite number of steps if there is no repetition of assignments of the integer variables.

\begin{algorithm}
\caption{Outer Approximation (SDP)}
\label{alg_pajarito}
\begin{algorithmic}[1]
\State Fix a tolerance $\varepsilon>0$.  
Set $\mathcal{T} = \{(e_i\pm e_j)(e_i\pm e_j)^\top: i,j \in [n]\}$.
 \State Solve problem OASDP($P_\mathcal{T}$) finding a minimizer $\Hat{X}$. 
\State Solve problem $SDP(\Hat{X})$, finding a minimizer $X'$.
\If{$ \left |  \left \langle C, X' \right  \rangle -  \left \langle C,   \Hat{X} \right \rangle \right | > \varepsilon$}
\State Find a dual optimal dual solution $S'$ of program $DSDP(\Hat{X})$. 
\State Set $\mathcal{T} = \mathcal{T} \cup \{S'\}$. Go to step $2$.
\EndIf
\Return $X'$.
\end{algorithmic}
\end{algorithm}

Lemma \ref{lem:finiteSDP} guarantees the finite convergence of Algorithm \ref{alg_pajarito}. To state this result, we need first to specify the dual of problem \eqref{inner_sdp}.

\begin{observation}
The dual of problem \eqref{inner_sdp} is given by    
\begin{gather}\label{Dual_OA_sdp}\tag{DSDP($X^L$)}
    \begin{aligned}
\max_{\gamma \in \mathbb{S}^n, S \in \mathbb{S}^n_+ , y\in \mathbb{R}^n} & \ \  \  b^\top y  + \left \langle \gamma, X^L \right \rangle \\
\text{s.t.} \quad & \gamma_{i,j} = 0 \quad  \forall (i,j)\in  [n]\times [n] \setminus I , \\ 
& C - \sum_{i=1}^r  A_iy_i - \gamma = S  \succeq 0.
\end{aligned}
\end{gather}

\end{observation}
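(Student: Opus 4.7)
The plan is to derive the dual by direct application of conic Lagrangian duality, treating $X \succeq 0$ as the underlying cone constraint and dualizing only the affine constraints. First I would introduce scalar multipliers $y_1,\ldots,y_r$ for the trace constraints $\langle A_i,X\rangle = b_i$, and a symmetric-matrix multiplier $\gamma \in \mathbb{S}^n$ whose $(i,j)$-entry dualizes the fixing constraint $X_{ij} = X^L_{ij}$ for $(i,j)\in L$, while being forced to zero outside $L$ so that no multiplier is attached to an unconstrained entry. The PSD cone is kept in the inner problem rather than dualized with an explicit multiplier on the primal side.

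Next I would assemble the Lagrangian
\[
\mathcal{L}(X;y,\gamma) \;=\; \langle C,X\rangle \;-\; \sum_{i=1}^r y_i\bigl(\langle A_i,X\rangle - b_i\bigr) \;-\; \langle \gamma, X - X^L\rangle,
\]
and regroup it as
\[
\mathcal{L}(X;y,\gamma) \;=\; \Bigl\langle C - \sum_{i=1}^r y_i A_i - \gamma,\; X\Bigr\rangle \;+\; b^\top y \;+\; \langle \gamma, X^L\rangle.
\]
Taking $\inf_{X\succeq 0}\mathcal{L}$ and using self-duality of $\mathbb{S}^n_+$ (so that $\langle M,X\rangle \ge 0$ for all $X\succeq 0$ iff $M\succeq 0$), the infimum equals $b^\top y + \langle \gamma,X^L\rangle$ when $C - \sum_i y_iA_i - \gamma \succeq 0$ and $-\infty$ otherwise. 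Renaming the PSD slack $S := C - \sum_i y_iA_i - \gamma$ gives exactly the program stated in the observation, with the explicit support restriction $\gamma_{ij} = 0$ for $(i,j)\notin L$ produced by the fact that only the entries in $L$ were dualized.

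The main bookkeeping step, which I see as the only subtlety, is respecting the symmetry of $X$. Because $X \in \mathbb{S}^n$, each primal entry constraint $X_{ij} = X^L_{ij}$ is really a constraint on the symmetric pair, so the correct dual variable is a symmetric matrix $\gamma$, and the Frobenius pairing $\langle \gamma, X\rangle$ automatically accounts for both $(i,j)$ and $(j,i)$ contributions. With this convention, the "elementary'' matrices used to encode $X_{ij} = X^L_{ij}$ sit inside $\mathbb{S}^n$, the support condition $\gamma_{ij}=0$ for $(i,j)\notin L$ is well defined, and the derivation above is rigorous without any strong-duality hypothesis, since the observation claims only the structural form of the dual.
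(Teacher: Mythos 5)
Your derivation is correct and is exactly the standard conic Lagrangian-duality argument that the paper implicitly relies on (the Observation is stated without proof), including the right handling of the symmetric multiplier $\gamma$ supported on $L$ and the self-duality of $\mathbb{S}^n_+$ to obtain the slack $S = C - \sum_{i=1}^r y_i A_i - \gamma \succeq 0$. Note only that the paper's statement has minor notational slips (writing $I$ for $L$ and $y\in\mathbb{R}^n$ rather than $\mathbb{R}^r$), which your version silently corrects, and that, as you say, no strong-duality assumption is needed merely to state the dual.
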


The key to obtaining appropriate hyperplanes to update the set $P_\mathcal{T}$ that implies the convergence of the algorithm in finite time is conic \textit{duality}.

\begin{lemma}\label{lem:finiteSDP}

Let $X^L$ be fixed and denote by $Z_{X^L}$ the optimal value of problem \eqref{inner_sdp}, with corresponding minimizer $X'$. Suppose that strong duality holds between the pair of problems 
\eqref{Dual_OA_sdp} and \eqref{inner_sdp}. 
Let $S'$ be optimal for the latter program. Set $\mathcal{T} = \{S'\}$ so that
\[
P_{\mathcal{T}} = \{X \in \mathbb{S}^n:\left \langle X,S'\right \rangle \geq 0\}. 
\]
Let $\Hat{X}$ be such that
$\langle A_i,\Hat{X}  \rangle = b_i \ \forall i \in [m]$, 
$\langle \Hat{X}, S' \rangle \geq 0$ and such that $\Hat{X}_{ij} = X^L_{ij}$ for all $(i,j) \in L$ . Then,
$\langle  C,\Hat{X} \rangle \geq Z_{X^L}$.
In addition, if $\Hat{X}$ is optimal for program \eqref{oasdp} - or in other words if the outer approximation \eqref{oasdp} returns a matrix with integer part equal to $X^L$-, then $X'$ is global optimal for \eqref{ISDP} and the outer approximation algorithm terminates.
\end{lemma}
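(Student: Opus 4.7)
The plan is to use strong duality between \eqref{inner_sdp} and \eqref{Dual_OA_sdp}, combined with the dual feasibility identity $C = S' + \sum_i A_i y'_i + \gamma'$, to directly rewrite $\langle C, \hat{X}\rangle$ as a sum of nonnegative/identifiable terms plus $Z_{X^L}$. This is the standard outer-approximation argument, specialized to the SDP setting.

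First, let $(y', \gamma', S')$ denote the dual optimal solution whose existence is guaranteed by the strong duality assumption. I would compute
\begin{equation*}
\langle C, \hat{X}\rangle = \langle S' + \sum_{i=1}^r A_i y'_i + \gamma', \hat{X}\rangle = \langle S', \hat{X}\rangle + \sum_{i=1}^r y'_i \langle A_i, \hat{X}\rangle + \langle \gamma', \hat{X}\rangle.
\end{equation*}
Using the hypothesis $\langle A_i, \hat{X}\rangle = b_i$ replaces the middle sum by $b^\top y'$. The key observation then is the sparsity built into the dual: since $\gamma'_{ij}=0$ for $(i,j)\notin L$, and since $\hat{X}$ agrees with $X^L$ on $L$, we have $\langle \gamma', \hat{X}\rangle = \langle \gamma', X^L\rangle$. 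By strong duality, $b^\top y' + \langle \gamma', X^L\rangle = Z_{X^L}$, so
\begin{equation*}
\langle C, \hat{X}\rangle = \langle S', \hat{X}\rangle + Z_{X^L} \geq Z_{X^L},
\end{equation*}
where the inequality uses the hypothesis $\langle S', \hat{X}\rangle \geq 0$. This establishes the first claim.

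For the second claim, suppose $\hat{X}$ is optimal for \eqref{oasdp}. Because \eqref{oasdp} is a relaxation of \eqref{ISDP}, $\langle C, \hat{X}\rangle$ is a valid lower bound on the optimal value of \eqref{ISDP}. On the other hand, $X'$ is feasible for \eqref{ISDP} (it satisfies the linear equalities, is PSD, and matches $X^L$ on the integer coordinates), so $\langle C, X'\rangle = Z_{X^L}$ is an upper bound. Note also that $X'$ is feasible for \eqref{oasdp} itself (since $X'\succeq 0$ implies $\langle X', T\rangle \geq 0$ for all $T \in \mathbb{S}^n_+$, including $S'$), so optimality of $\hat{X}$ in \eqref{oasdp} gives $\langle C, \hat{X}\rangle \leq Z_{X^L}$. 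Combining with the first part yields $\langle C, \hat{X}\rangle = Z_{X^L} = \langle C, X'\rangle$, which sandwiches the optimal value of \eqref{ISDP} and certifies $X'$ as a global optimum.

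The manipulations themselves are routine; the only subtlety, and what I would flag as the main point to verify carefully, is the role of the sparsity pattern of $\gamma'$ in matching $\langle \gamma', \hat{X}\rangle$ to $\langle \gamma', X^L\rangle$. This is precisely what makes the cut $\langle S', X\rangle \geq 0$ a valid and tight aggregator of the dual information even though $\hat{X}$ is only a mixed-integer-linear feasible point, not a PSD matrix itself.
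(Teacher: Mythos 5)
Your proposal is correct and follows essentially the same route as the paper: the first part is the paper's chain of equalities (dual feasibility $C=S'+\sum_i A_iy'_i+\gamma'$, the sparsity of $\gamma'$ matching $\hat{X}$ to $X^L$ on $L$, and strong duality) read in the reverse direction, and the second part is the same sandwich $\langle C,\hat{X}\rangle \leq OPT \leq Z_{X^L} \leq \langle C,\hat{X}\rangle$. No gaps to report.
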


\begin{proof}

First, observe that
\begin{equation}
 \begin{aligned}
     0 \leq \langle \Hat{X}, S' \rangle & = \langle \Hat{X}, C-\sum_{i=1}^r A_iy_i-\gamma \rangle 
    =   \langle \Hat{X}, C \rangle - \langle \Hat{X}, \sum_{i=1}^r A_iy_i-\gamma \rangle \\
   &  =  \langle \Hat{X}, C \rangle -  \sum_{i=1}^r y_i \langle \Hat{X},A_i \rangle - \langle \Hat{X},\gamma \rangle\\
   & =  \langle \Hat{X}, C \rangle -  \sum_{i=1}^r y_ib_i - \langle \Hat{X},\gamma \rangle \\
   & = \langle \Hat{X}, C \rangle -  \sum_{i=1}^r y_ib_i - \langle X^L,\gamma \rangle =  \langle \Hat{X}, C \rangle - Z_{X^L}.
  \end{aligned}
\end{equation}

The last equation is valid because $\gamma$ is zero for the $(i,j)$ entries not in $L$ and because $\Hat{X}$ matches $X^L$ in those entries. Hence, we derive 
$\langle C,\Hat{X} \rangle \geq Z_{X^L}$.

To conclude, let $OPT$ denote the optimal value of \eqref{ISDP}. Observe that since program \eqref{oasdp} is a relaxation of \eqref{ISDP}, we have $ \langle C,\Hat{X} \rangle \leq OPT $. Now, $Z_{X^L}$ is the optimal value of program \eqref{inner_sdp}  whose optimizer is feasible to program \eqref{ISDP} so that we have $OPT \leq Z_{X^L} $. All in all, we get the inequalities

\[\langle C,\Hat{X} \rangle \leq OPT  \leq Z_{X^L} \leq \langle C,\Hat{X} \rangle\]
which imply that $Z_{X^L} = OPT$. Therefore the gap with outer approximation relaxation is $0$, and $X'$ is optimal for \eqref{ISDP}.
\end{proof}

The main consequence of this lemma is that the outer approximation algorithm will not cycle through integer solutions. Indeed, if an integer solution is repeated in program \eqref{inner_sdp}, then the algorithm will terminate in the next step by proving a gap of $0$ between the inner and outer approximations. This algorithm is guaranteed to terminate under mild assumptions but in the worst case it might need to solve an exponential number of subproblems. We point out that the \textit{initialization} of $\mathcal{T}$ to the set 
$ \{(e_i\pm e_j)(e_i\pm e_j)^\top: i,j \in [n]\}$ amounts to the linear constraints on $X$ given by
$
X_{ii} + X_{jj} \geq 2|X_{i,j}| \ \forall i,j \in [n],
$
which is necessary for positive semidefinitness.

\section{Refining outer approximations}\label{oa_sec:3}

The efficiency of the outer approximation algorithm for integer semidefinite programs primarily depends on two factors: the speed of solving integer subproblems and the quality of the polyhedral approximation. Recent advances in commercial solvers such as \texttt{Gurobi} have significantly improved solutions to mixed-integer linear and second-order problems. Thus, further improvements in outer approximation methods are more likely to come from enhancing the quality of polyhedral approximations.

In \cite{de2023instance}, the authors tackle the issue of constructing effective polyhedral approximations to the feasible sets of semidefinite optimization programs. The key insight is to leverage the spectral properties of both the objective coefficient matrix and the matrices defining the semidefinite constraints. A polyhedral approximation is considered "good" if solving the associated linear relaxation yields objective values close to those of the original semidefinite program.

In this section, we apply this approach to binary semidefinite problems in the form \eqref{BSDP}. Due to the structure of the constraint $X - xx^\top \succeq 0$, it is convenient to consider the equivalent constraint
\[
\begin{bmatrix} X & x^\top \\ x & 1 \end{bmatrix}\succeq 0,
\]
thus allowing us to adapt techniques from \cite{de2023instance}.

Consider the standard semidefinite optimization problem
\begin{equation}
\begin{aligned}
\min_{X\in \mathbb{S}^{ n}} & \langle C,X\rangle \quad \\
\text{s.t. } & \langle A_i,X \rangle = b_i \quad \forall i \in [r], \\
& X \succeq 0, 
\end{aligned}
\end{equation}
with its dual
\begin{equation}
\begin{aligned}
\max_{y\in \mathbb{R}^{n}} & \ b^\top y \\
\text{s.t. } & C - \sum_{i=1}^r y_i A_i \succeq 0.
\end{aligned}
\end{equation}

Assuming strong duality and strict feasibility, we consider linear relaxations of this SDP given by
\begin{equation}
\begin{aligned}
\min_{X\in \mathbb{S}^{n}} \ & \langle C,X\rangle \\
\text{s.t. } & \langle A_i,X \rangle = b_i \quad \forall i \in [r], \\
& v^\top X v \geq 0 \quad \forall v \in \mathcal{S},
\end{aligned}
\end{equation}
for some finite set $\mathcal{S}$. The authors of \cite{de2023instance} demonstrated that choosing $\mathcal{S}$ based on eigenvectors of certain dual feasible solutions provides effective polyhedral approximations.
Specifically, they showed that if the matrices $\{C, A_1, \dots, A_r\}$ are simultaneously diagonalizable (SD), then the linear relaxation matches exactly the SDP solution. Although full simultaneous diagonalizability is rare, a useful relaxation is introduced: requiring $C$ to be simultaneously diagonalizable with a suitable aggregation of the constraint matrices, possibly including the identity matrix. Under these relaxed conditions, the authors provided conditions ensuring that the linear relaxation still closely approximates the original SDP.

Crucially, the task of identifying simultaneously diagonalizable aggregations reduces to finding matrices that commute with $C$, due to the well-known characterization of simultaneous diagonalizability:

\emph{A set of symmetric matrices is simultaneously diagonalizable if and only if all matrices in the set commute pairwise.}

For $y\in \mathbb{R}^r$ and matrices $A_1,\dots,A_r \in \mathbb{S}^n$ denote by $\mathcal{A}(y)$ the aggregation 
$\sum_{i=1}^ry_iA_i$. Finding aggregations that commute with the objective can thus be formulated and solved via a linear program:
\begin{equation}
\label{prog_commute}
\begin{aligned}
    \min_{y\in \mathbb{R}^{n}} \ & f(y) \\
    \text{s.t. } & C\mathcal{A}(y) = \mathcal{A}(y)C,
\end{aligned}
\end{equation}

where $f(y)$ is an arbitrary linear function, and notably, the zero matrix always serves as a feasible solution. We refer the reader to 
\cite{de2023instance} for the details. 

\subsection{Second-order strengthening}

We recall that the second-order cone $\mathcal{L}^{1+n}$ is given by $
\mathcal{L}^{1+n} = \{(r,t):\mathbb{R}^{1+n}:r \geq \|t\|^2_2\}.
$
In the particular case of an integer semidefinite optimization problem, second-order necessary conditions for positive semidefinitness can be imposed, resulting in second-order integer problems for the outer approximation step in Algorithm \ref{alg_pajarito}, rather than integer linear problems.
\cite{coey2020outer} takes this idea further and proposes a version of \texttt{PAJARITO} using a second-order cone outer approximation for the outer approximation step. The \emph{rotated second order cone} $\mathcal{V}^{2+n}$ is given by $
\mathcal{V}^{2+n} = (r,s,t) \in \mathbb{R}^{2+n}: r,s\geq \ 0, 2rs \geq \|t\|_2^2. 
$
This cone is self-dual and can be obtained as an invertible linear transformation of the standard second-order cone $\mathcal{L}$ as $(r,s,t) \in \mathcal{V}^{n+2}$ if and only if $(r+s,r-s,\sqrt{2}t_1,\dots, \sqrt{2}t_n ) \in \mathcal{L}^{2+n}$. One can check that given $X\succeq 0$ the \emph{rotated second-order cone constraints}
$
(X_{ii},X_{jj}, \sqrt{2}X_{ij}) \in \mathcal{V}^3 
$
are valid for each $i$ and $j$. This corresponds to saying that the $2 $ by $2$ minors of $X$ are positive semidefinite. 
Equivalent cuts are also described in  \cite{bertsimas2020polyhedral} where the authors mention that if $X$ is positive semidefinite, then $X$ satisfies

\begin{equation}\label{eq:soc_cuts}
\left\| \begin{pmatrix}2X_{i,j} \\ X_{i,i}-X_{j,j} \end{pmatrix} \right\|_2 \leq X_{i,i}+X_{j,j}, \ \forall i \in [n], \  \forall j \in [n].
\end{equation}

These cuts are also mentioned in \cite{wang2021polyhedral}, and
in fact, all of them can be derived using an alternative version of the Schur Complement Lemma presented in \cite{kim2003second}. We briefly mention this idea in Subsection \ref{subsec:dissagregations}. 
However, it is not clear that enforcing the PSD constraints on $2$ by $2$ minors is necessarily beneficial. Although they provide a tighter approximation, the additional $\frac{n^2-n}{2}$ cuts added result in a heavy burden on the integer second-order solver. Therefore, if second-order cone cuts are to be added, they must be few and significantly improve the quality of the approximation. In what follows, we describe a derivation of such cuts using the specific structure that binary SDPs arising from binary QCQPs have. \emph{Quadratically constrained quadratic problems} are problems of the form \ref{qcqp_prob} and their corresponding Shor semidefinite relaxation is given by \eqref{qcqp_prob_shor}

\begin{equation} 
\label{qcqp_prob}
\begin{aligned}
\min_{x } & \  x^\top C x + 2d_0^\top x \\ 
\text{s.t. }& \ x^\top A_i x + 2d_i^\top x \leq b_i \quad \forall i \in [r]
\end{aligned}
\end{equation}

\begin{equation} 
\label{qcqp_prob_shor}
\begin{aligned}
\inf_{x \in \mathbb{R}^n, X\in \mathbb{S}^n} & \left \langle C,X \right \rangle + d_0^\top x  \\
 \text{s.t. } & \left \langle A_i,X \right \rangle +  d_i^\top x\leq  b_i \quad \forall i \in [r], \\ 
& X-xx^\top \succeq 0.
\end{aligned}     
\end{equation}

By fixing a finite set $\mathcal{S}\subseteq \mathbb{R}^n$, this latter problem can be relaxed to the second-order cone problem 

\begin{equation}\label{shor_soc}
\begin{aligned}
&\inf_{x \in \mathbb{R}^n, X\in \mathbb{S}^n}  \left \langle C,X \right \rangle + d_0^\top x  \\
   & \text{s.t. }   \left \langle A_i,X \right \rangle +  d_i^\top x\leq  \ b_i \quad \forall i \in [r], \\ 
 &  \ \ \ \ v^\top \left  (X-xx^\top\right)v \geq 0 \quad \forall   v\in \mathcal{S}.
\end{aligned}     
\end{equation}

Notice that the cuts $ v^\top \left  (X-xx^\top\right)v \geq 0$ are tailored to the special structure of program \eqref{BSDP} and are second-order cuts, resulting in a much tighter approximation of the convex region $X-xx^\top \succeq 0$ than the polyhedral alternative.

We now propose a second-order cone outer approximation algorithm to solve problem \eqref{BSDP}. In this setting, the ambient space of the positive semidefinite matrices considered is $\mathbb{S}^{n+1}$ since our variable matrix is $\begin{bmatrix}
X & x \\
x^\top & 1 
\end{bmatrix} \succeq 0$. Let $\mathcal{S}$ be a finite subset of $\mathbb{R}^n$. Let $\mathcal{T} = \{T_1,\dots,T_q\}\subseteq \mathbb{S}^{n+1}_+$ and define 
$P_{\mathcal{T}}:= \{X \in \mathbb{S}^{n+1}: \langle X, T \rangle \geq 0 \ \forall T \in \mathcal{T} \} $. Consider the second-order relaxation of \eqref{BSDP} given by

\begin{gather}\label{soc_bsdp}\tag{SOC($\mathcal{S},\mathcal{T}$)}
\begin{aligned}
\min_{X, x } \ & \left \langle C,X \right \rangle  + 2d_0^\top x \\
\text{ s.t. } & \langle A_i,X \rangle + d_i^\top x \leq b_i \quad \forall \ i \in [r], \\
&   D x  = t, \\
&  Diag(X)=x, \\
& x \in \{0,1\}^n,\\
& v^\top \left  (X-xx^\top\right)v  \geq 0 \quad \forall v\in \mathcal{S}, \\
& \begin{bmatrix}
X & x \\
x^\top & 1 
\end{bmatrix} \in P_{\mathcal{T}}.
\end{aligned}
\end{gather}
    
With this program at hand, we can introduce the \emph{spectral second order outer approximation algorithm}.

\begin{algorithm}
\caption{Spectral-second-order Outer-Approximation}
\label{alg_spec_soc_OA}
\begin{algorithmic}[1] 

\State Fix a tolerance $\varepsilon>0$. Find $q^1 \in \mathbb{R}^n$ such that $\sum_{i=1}^r q^1_iA_i = I$.  
\State Use program \eqref{prog_commute} to find $q^2$ with support disjoint from $q_1$ such that the matrices $C$ and $\sum_{i=1}^r A_iq_i^2$ commute. Let $U$ be a matrix that simultaneously diagonalizes $C$ and $\sum_{i=1}^r A_iq_i^2$. Denote its columns by $v_1,\dots, v_n$.
\State Set $\mathcal{S} = \{ v_1,\dots, v_n \}$. 
Set $\mathcal{T} = \emptyset.$
\State Solve problem \eqref{soc_bsdp} finding a minimizer $\Bar{X}$. 
\State Solve problem $SDP(\Bar{X})$, finding a minimizer $\Hat{X}$.
\If{$ \left |  \left \langle C, \Hat{X} \right  \rangle -  \left \langle C,   \Bar{X} \right \rangle \right | > \varepsilon$}
\State Find a dual optimal dual solution $\Hat{S}$ of program $DSDP(\Bar{X})$. 
\State Set $\mathcal{T} = \mathcal{T} \cup \{\Hat{S}\}$. Go to step $4$.
\EndIf.
\State Return $\Hat{X}$.
\end{algorithmic}
\end{algorithm}

In this algorithm, we update the outer approximation by adding linear constraints coming from a dual optimal solution of 
$DSDP(\Bar{X}^L)$. By Lemma \ref{lem:finiteSDP}, this guarantees the termination of the algorithm. However, since we are now dealing with a second-order cone integer program, one may think of adding second-order cuts to strengthen the outer approximation further. In other words, it might be worthwhile to update the set $\mathcal{S}$. We describe this process in the next subsection.

An interesting variant of this algorithm consists of using so-called \emph{lazy constraints}. In fact, notice that Algorithm \ref{alg_spec_soc_OA} solves an integer second-order cone program and hence explores a Branch-and-Bound tree at each iteration. Instead, it seems worthwhile to cut solutions that are not positive semidefinite in each of the nodes by imposing constraints of the form $v^\top X v \geq 0$ dynamically. Such dynamic constraint generation is known as lazy constraint callback in the optimization literature.
Given an instance of problem \eqref{soc_bsdp} with $\mathcal{T}= \emptyset$ and $\mathcal{S} = \{v_1,\dots,v_n\}$ where the $v_i$ are defined as in Algorithm \ref{alg_spec_soc_OA}, the variant algorithm maintains a pool of lazy constraints $\mathcal{LC}$ that is initially empty. Then, a Branch-and-Bound procedure is initiated. Whenever an optimal solution $\Hat{X}$ of a branch is found, the algorithm tests whether $u^\top X u \geq 0$ where $u$ ranges over the elements $\mathcal{LC}$. If all of these inequalities are satisfied but $\Hat{X}$ is not PSD, the algorithm computes an eigenvector $w$ corresponding to the least eigenvalue of $\Hat{X}$ and updates $\mathcal{LC}$ to $ \mathcal{LC}\cup \{w\} $. This procedure is iterated until optimality is proven by the Branch-and-Bound procedure. We refer to this algorithm as the Spectral-Lazy-second-order-B\&C.

\begin{algorithm}
\caption{Spectral-Lazy-second-order-B\&C}
\label{alg_lazy_soc}
\begin{algorithmic}[1]

\State Fix a tolerance $\varepsilon>0$. Find $q^1 \in \mathbb{R}^n$ such that $\sum_{i=1}^r q^1_iA_i = I$.  
\State Use program \eqref{prog_commute} to find $q^2$ with support disjoint from $q_1$ such that the matrices $C$ and $\sum_{i=1}^r A_iq_i^2$ commute. Let $U$ be a matrix that simultaneously diagonalizes $C$ and $\sum_{i=1}^r A_iq_i^2$. Denote its columns by $v_1,\dots, v_n$. Set $\mathcal{S} = \{ v_1,\dots, v_n \}$. 
\State Start (or continue) the Branch-and-Bound procedure to solve problem \eqref{soc_bsdp}. 
\State Whenever a feasible solution $\Hat{X}$ is found go to $5$. 
\State if $\lambda_n(\Hat{X})<-\varepsilon$, add the constraint $w^\top Xw \geq 0$ where $w$ is a eigenvector corresponding to $\lambda_n(\Hat{X})$ to program  \eqref{soc_bsdp}.
\State Go to $3$.
\Return $\Hat{X}$.
\end{algorithmic}
\end{algorithm}

\subsection{Cut disaggregation}\label{subsec:dissagregations}

At a given iteration, the outer approximation algorithm 
solves problem $SDP(\Bar{X})$ and finds an optimal dual solution $S'$ of problem $DSDP(\Hat{X})$. If the gap between the objective of the outer approximation integral program and the inner semidefinite program with fixed integer values exceeds a threshold $\varepsilon$, the algorithm iterates by refining the outer approximation, adding the constraints $\langle X, S'\rangle \geq 0$ which guarantees that the outer approximation algorithms terminate in finite time. This strategy can be improved by adding cuts that are implied by the positive semidefinitness of $X$ and that, in turn, imply $\langle X, S'\rangle \geq 0$. The following desegregation of cuts are suggested in \cite{de2023instance} and in \cite{coey2020outer}.

\begin{observation}
Let $S \in \mathbb{S}^n_+$ be a positive semidefinite matrix. We have that $S = \sum_{j=1}^n \lambda_jv_jv_j^\top$ where $\lambda_j, j\in [n]$ are the eigenvalues of $S'$ and the vector $v_j$ is a eigenvector of $S$ corresponding to $\lambda_j$ for each $j\in [n]$. Then, \ $ \langle X, v_jv_j^\top \rangle = v_j^\top X v_j \geq 0 \ \forall \ j \in [n] \text{, implies } \langle X,S \rangle \geq 0.$
 
\end{observation}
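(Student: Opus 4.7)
The plan is to expand the Frobenius inner product $\langle X, S \rangle$ using the spectral decomposition of $S$ and then verify that each resulting term is non-negative. Since $S \in \mathbb{S}^n_+$ is symmetric, the spectral theorem guarantees the decomposition $S = \sum_{j=1}^n \lambda_j v_j v_j^\top$ stated in the observation; moreover, since $S$ is positive semidefinite, every eigenvalue satisfies $\lambda_j \geq 0$.

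First, I would invoke the bilinearity of the Frobenius inner product to write
\begin{equation*}
\langle X, S \rangle = \left\langle X, \sum_{j=1}^n \lambda_j v_j v_j^\top \right\rangle = \sum_{j=1}^n \lambda_j \, \langle X, v_j v_j^\top \rangle.
\end{equation*}
Next, I would rewrite each summand by applying the cyclic property of the trace, which was already recorded in the notation section: $\langle X, v_j v_j^\top \rangle = \operatorname{tr}(X v_j v_j^\top) = \operatorname{tr}(v_j^\top X v_j) = v_j^\top X v_j$.

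Finally, I would combine these pieces. The hypothesis gives $v_j^\top X v_j \geq 0$ for each $j \in [n]$, and positive semidefiniteness of $S$ gives $\lambda_j \geq 0$. Hence every term $\lambda_j \, v_j^\top X v_j$ in the expansion is a product of two non-negative scalars, so the entire sum is non-negative, yielding $\langle X, S \rangle \geq 0$.

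There is no real obstacle here; the result is essentially an immediate consequence of the spectral theorem plus bilinearity of $\langle \cdot, \cdot \rangle$. The only subtlety worth flagging in the write-up is that the observation's strength lies not in the proof itself but in its algorithmic consequence: instead of adding the single aggregated cut $\langle X, S' \rangle \geq 0$ in Algorithm \ref{alg_spec_soc_OA}, one may add the $n$ disaggregated (and strictly stronger) cuts $v_j^\top X v_j \geq 0$, each of which implies a portion of the dual cut, while together they imply it. Thus a brief remark after the proof explaining this tightening would be appropriate, even though it lies outside the formal statement.
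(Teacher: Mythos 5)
Your proof is correct and follows exactly the argument the paper intends: the observation is stated without an explicit proof, and your expansion via bilinearity, the identity $\langle X, v_jv_j^\top\rangle = v_j^\top X v_j$, and $\lambda_j \geq 0$ from $S \succeq 0$ is precisely the standard argument the paper relies on (and mirrors its proof of Lemma~\ref{lem_soc_valid}). No gaps; your closing remark about the algorithmic role of the disaggregated cuts matches the paper's use of the observation in Algorithm~\ref{alg_spec_soc_OA}.
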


These constraints are linear in $X$ and, therefore, can be added to Algorithm \ref{alg_spec_soc_OA} in step $8$.
Perhaps more interestingly, the disaggregation of $S = \sum_{j=1}^n \lambda_jv_jv_j^\top$ can also be used to impose second-order cone constraints directly related to the structure of the semidefinite matrix  $\begin{bmatrix}
X & x \\
x^\top & 1 
\end{bmatrix}$. Notice the optimal dual variable $S^*$ obtained in step $7$ of Algorithm \ref{alg_spec_soc_OA} is of dimension $n+1 \times n+1$.

\begin{lemma}\label{lem_soc_valid}
Suppose that the matrix $\begin{bmatrix}
X & x \\
x^\top & 1 
\end{bmatrix} \in \mathbb{S}^{n+1} $ is positive semidefinite, or equivalently $X-xx^\top \succeq 0$. Let $S = \sum_{j=1}^{n+1} \lambda_j v_jv_j^\top \succeq 0$. For each $j \in [n+1]$ denote by $z_j$ the $n+1$st entry of $v_j$ and by $w_j \in \mathbb{R}^n$ the vector $v_j$ restricted to its first $n$ entries. Furthermore, suppose that for each $j \in [n+1]$ the equation $
w_j^\top X w_j \geq (w_j^\top x)^2$
holds. Then, it follows that $
\left \langle  \begin{bmatrix}
X & x \\
x^\top & 1 
\end{bmatrix} , S \right \rangle \geq 0.$

\end{lemma}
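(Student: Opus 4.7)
The plan is to reduce the inner product $\langle M, S\rangle$, where $M=\begin{bmatrix} X & x \\ x^\top & 1\end{bmatrix}$, to a sum of squares using the spectral decomposition of $S$ and the hypothesis that each rank-one second-order inequality $w_j^\top X w_j \geq (w_j^\top x)^2$ holds.

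First, I would use the bilinearity of the Frobenius inner product and the decomposition $S = \sum_{j=1}^{n+1}\lambda_j v_j v_j^\top$ to rewrite
\begin{equation*}
\bigl\langle M, S\bigr\rangle = \sum_{j=1}^{n+1} \lambda_j\, v_j^\top M v_j.
\end{equation*}
Since $S\succeq 0$, each $\lambda_j \geq 0$, so it suffices to show $v_j^\top M v_j \geq 0$ for every $j$.

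Next, I would split each eigenvector as $v_j = \begin{pmatrix} w_j \\ z_j\end{pmatrix}$ with $w_j\in\mathbb{R}^n$, $z_j\in\mathbb{R}$, and expand the quadratic form block-wise:
\begin{equation*}
v_j^\top M v_j = w_j^\top X w_j + 2 z_j\, w_j^\top x + z_j^{2}.
\end{equation*}
By the hypothesis $w_j^\top X w_j \geq (w_j^\top x)^2$, letting $a = w_j^\top x$ and $b = z_j$, this is bounded below by
\begin{equation*}
a^{2} + 2ab + b^{2} = (a+b)^{2} \geq 0.
\end{equation*}
Thus $v_j^\top M v_j \geq 0$ for every $j$.

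Combining the two steps, $\langle M, S\rangle$ is a non-negative combination of non-negative numbers, hence $\langle M, S\rangle \geq 0$, as claimed. There is no real obstacle here; the argument is purely a spectral expansion followed by a completing-the-square observation. The only point worth flagging is the bookkeeping convention that the ``$(n{+}1)$-st entry'' of $v_j$ is $z_j$, which must line up with the block structure of $M$ so that the cross term $2z_j w_j^\top x$ appears with the correct sign. Once that is fixed, everything else is immediate.
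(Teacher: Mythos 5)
Your proposal is correct and follows essentially the same argument as the paper: expand $\langle M, S\rangle$ via the spectral decomposition of $S$, write $v_j^\top M v_j = w_j^\top X w_j + 2 z_j w_j^\top x + z_j^2$, and use the hypothesis together with $(w_j^\top x + z_j)^2 \geq 0$ to conclude each term is nonnegative. The paper merely phrases the completing-the-square step as adding two inequalities rather than substituting a lower bound, which is the same algebra.
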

\begin{proof}

Observe that for each $j \in [n+1]$ we have $
(w_j^\top x + z_j)^2 = (w_j^\top x )^2 + 2w_j^\top x z_j + z_j^2 \geq 0.$
This implies that $2(w_j^\top x)z_j + z^2 \geq -(w_j^\top x)^2$.
Now, we have that $w_j^\top X w_j \geq (w^\top x)^2$ because $X-xx^\top \succeq 0$. Adding the two previous equations yields

$
w_j^\top X w_j + 2(w_j^\top x)z_j + z_j^2 \geq  -(w^\top x)^2 + (w^\top x)^2 = 0.
$
To conclude, observe that

\[
\left \langle  \begin{bmatrix}
X & x \\
x^\top & 1 
\end{bmatrix}   , v_jv_j^\top  \right \rangle = w_j^\top X w_j + 2(w_j^\top x)z_j + z_j^2,
\]
so multiplying by $\lambda_j \geq 0$ (since $S\succeq 0$) and adding over $j$ gives the result.

\end{proof}

To use the disaggregation of $S' =\sum_{j=1}^{n+1} \lambda_j v_jv_j^\top$ we can add the cuts $w_j^\top X w_j \geq (w_jz_j)^2 $ for each $j\in [n+1]$ in step $8$ of algorithm \ref{alg_spec_soc_OA}. By our previous lemma, this ensures that the conditions of Lemma \ref{lem:finiteSDP} are satisfied.

In the implementation proposed in \cite{coey2020outer}, the authors propose an alternate way of generating second-order cuts using the disaggregation of $S'$ based on the ideas of 
\cite{kim2003second}, which proposes a set of second-order quadratic constraints implied by positive semidefinitness. We briefly present their idea for comparison. Let $X \in \mathbb{S}^n$ be positive semidefinite. Fix $i \in [n]$ and let $w \in \mathbb{R}^n$ be an arbitrary vector. Let $w_i$ be the $i-th$ entry of $w$, and $\Bar{w} \in \mathbb{R}^{n-1}$ be the vector obtained by removing the $i-th$ entry of the vector $w$. Let $X_{ii}$ be the $i-th$ diagonal entry of a matrix $X$.

Let $u \in \mathbb{R}^{n-1}$ be the vector obtained by removing the $i-th$ entry of the $i-th$ row of $X$, i.e., by removing $X_{ii}$ from the $i-th$ row. Let $\Bar{X}\in \mathbb{S}^{n-1}$ be the matrix obtained by removing from $X$ its $i-th$ column and row. In \cite{kim2003second}, it is shown that $X\succeq 0$ if $X_{ii} \geq 0 $ and

$
\Bar{X} \succeq 0 \ \text{ and } (X_{i,i}) \Bar{X} - uu^\top \geq 0.
$Furthermore,  observe that the constraint
\begin{equation}\label{constr_rsoc}
(X_{ii}, \Bar{w}^\top \Bar{X}  \Bar{w}, \sqrt{2} \Bar{w}^\top u ) \in \mathcal{V}^3
\end{equation}
is equivalent to 
\begin{equation}
w^\top  \Bar{X} w \geq 0 \text{ and } w^\top (X_{ii} \Bar{X}) w \geq (w^\top u)^2.
\end{equation}

The condition $ \Bar{X} \succeq 0 $ implies $w^\top  \Bar{X} w \geq 0$ and it is direct to check that 
$(X_{i,i}) \Bar{X} - uu^\top \geq 0$ implies that 
$ \Bar{w}^\top (X_{ii} \Bar{X})  \Bar{w} \geq ( \Bar{w}^\top u)^2.$
Hence, the constraint \eqref{constr_rsoc} is implied by $X \succeq 0$ and can be used to obtain a stronger, second-order cone outer approximation to the positive semidefinite cone.  Setting $w = e_i\pm e_j, i>j \in [n]$ results in the cuts 
$\mathcal{V}^{2+n} = (r,s,t) \in \mathbb{R}^{2+n}: r,s\geq \ 0, 2rs \geq \|t\|_2^2 $ proposed in \citet{coey2020outer}. To generate cuts using the disaggregation of $S'$, the authors heuristically set $i$ to be the index of the largest absolute entry of a vector $v_j$ and add the cut \eqref{constr_rsoc} by setting $w=v_j$.

\section{Applications to binary semidefinite programs}
\label{oa_sec:4}

In this section, we introduce two binary semidefinite problems to evaluate our algorithms. In Section \ref{oa_sec:5}, we test the different algorithms considered for the problems below.

\subsection{Cardinality-constrained Boolean least squares}

The cardinality-constrained Boolean least squares is a problem of the following form:

\begin{equation}
\begin{aligned}
\min_{x \in \mathbb{R}^n}  &\|Ax-b\|^2_2 \\
\text{s.t.} & \sum_{i=1}^n x_i \leq k, \\
& x \in \{0,1\}^n
\end{aligned}
\end{equation}

where $A \in \mathbb{R}^{m \times n}$ and $b \in \mathbb{R}^m$, $k\in \mathbb{N}$. This problem appears in digital communications in the setting of maximum likelihood estimation of digital signals \cite{park2017general}  By writing $x_i^2 = x_i$, this problem can be reformulated exactly as the following BSDP:
\begin{gather}\label{BLS_SDP}\tag{BLSSDP}
\begin{aligned}
\inf_{x \in \mathbb{R}^n, X\in \mathbb{S}^n} & \left \langle A^\top A,X \right \rangle - 2b^\top A x + b^\top b  \\
    \text{s.t. } & Diag(X) = x,  \\
    & \sum_{i=1}^n x_i \leq k,\\
    & X-xx^\top \succeq 0, \\
    & x \in \{0,1\}.
\end{aligned}
\end{gather}

This problem is closely related to the computation of
\emph{restricted isometry constants}. These quantities are relevant in the context of compressed sensing and are NP-hard to compute.
In \cite{gally2016computing}, Gally and Pfetsch propose a mixed-integer semidefinite program to compute them. \cite{kobayashi2020branch} uses the same problem to test their proposed algorithms  In contrast to our work, \cite{gally2016computing,kobayashi2020branch} let the $x$ variables take arbitrary real values while we fix them to be binary.

\subsection{Quadratic knapsack problem}

We next consider the \emph{Quadratic Knapsack problem } \cite{pisinger2007quadratic} where given $w \in \mathbb{R}^n$, $C \in \mathbb{S}^n$, $c \in \mathbb{R}_+$ can be written as in \ref{qkp}. Its exact reformulation is given in \ref{qkp_sdp}.

\begin{gather}\label{qkp}\tag{QK}
\begin{aligned}
\max_{x \in \mathbb{R}^n} & \  x^\top C x  \\
\text{s.t.} & \ \sum_{j=1}^k w_j x_j  \leq c, \\ 
& \  x \in \{0,1\}^n 
\end{aligned}
\end{gather}

\begin{gather}\label{qkp_sdp}\tag{QKSDP}   
\begin{aligned}
\max_{x \in \mathbb{R}^n, X\in \mathbb{S}^n} & \left \langle C,X \right \rangle \\
    \text{s.t. } & \ Diag(X) =  x, \\ 
    & \ \sum_{i=1}^n w_ix_i \leq c,\\
    & \ X-xx^\top  \succeq 0, & \\
    & \ x \in \{0,1\}.
\end{aligned}
\end{gather}

\section{Experimental results}\label{oa_sec:5}

In this section, we present experiments to evaluate the quality of our proposed algorithms, which are designed to tackle the special structure of binary semidefinite programs arising as exact formulations of binary quadratic problems. To simplify, we denote algorithm \ref{alg_spec_soc_OA} as \texttt{OA\_SOC} and algorithm \ref{alg_lazy_soc} as \texttt{LAZY\_SOC}.
We compare our algorithms with the majority of mixed-integer semidefinite program solvers that are known to us by means of evaluating the solving time on instances of the problems introduced in Section \ref{oa_sec:4}. These algorithms are the following. Since the generation of cuts $ v^\top \left  (X-xx^\top\right)v \geq 0$ does not rely on the OA iterations, they can be regarded as a strengthened formulation and also applied to other mixed-integer semidefinite program solvers. We denote the problems with the cuts as basic formulations, and those without the cuts are denoted as spectral formulations.

\begin{enumerate}
    \item The pure Outer Approximation algorithm\footnote{ \url{https://github.com/jump-dev/Pajarito.jl}} proposed by \cite{lubin2018polyhedral} and implemented in \texttt{PAJARITO}, denoted as \texttt{PAJARITO\_OA}-basic and \texttt{PAJARITO\_OA}-spectral.

    \item The Branch-and-Bound-outer-approximation extension of \texttt{PAJARITO} proposed in \cite{coey2020outer}, denoted as \texttt{PAJARITO\_TREE}-basic and \texttt{PAJARITO\_TREE}-spectral.

    \item The cutting plane algorithm of Kobayashi and Takano \cite{kobayashi2020branch}, denoted as \texttt{KOB\_Cutting\_Plane}.

    \item The Branch-and-Cut algorithm of \citet{kobayashi2020branch}, denoted as \texttt{KOB\_B\&C}.

    \item The Branch-and-Bound method implemented in \texttt{SCIP-SDP}\footnote{\url{https://www.opt.tu-darmstadt.de/scipsdp/}} \cite{gally2018framework}, denoted as \texttt{SCIPSDP}-basic and \texttt{SCIPSDP}-spectral.
\end{enumerate}

The pure outer approximation algorithm implemented in \texttt{PAJARITO} is similar to Algorithm \ref{alg_spec_soc_OA}, as the polyhedral outer approximations are updated in the same fashion. The main difference is that \texttt{PAJARITO}'s default version of the outer approximation algorithm solves mixed-integer linear problems rather than mixed-integer second-order cone problems. The Branch-and-Bound-Outer-Approximation extension of the algorithm proposed in \cite{coey2020outer} is more sophisticated and works by maintaining a single branch tree and inner and outer approximating the problems at each node of the tree. CUTSDP works by iteratively generating valid constraints for the SDP constraint and then by solving mixed-integer linear problems. If the solution for an integer linear problem is not PSD, an eigenvector corresponding to a negative eigenvalue is computed and added as a constraint in the problem. This algorithm is very similar to the cutting plane of Kobayashi and Takano. The Branch-and-cut algorithm by the same authors is similar to Algorithm \ref{alg_lazy_soc}, but without the initialization steps $1$ and $2$. Finally, \texttt{SCIP-SDP} implements a generic branch-and-bound approach where the subproblems inherit strong duality and the Slater condition of the semidefinite relaxations. To enhance performance, several solver components are incorporated, including dual fixing, branching rules, and primal heuristics.

To compare the different algorithms, we implemented Algorithm \ref{alg_spec_soc_OA} and \ref{alg_lazy_soc} in \texttt{Julia} and used the package \texttt{PAJARITO} directly. We implemented the two algorithms of \cite{kobayashi2020branch}. Since CUTSDP is essentially the same algorithm as the cutting plane algorithm of these two authors, we do not test the latter algorithm. We directly use \texttt{SCIPSDP} to solve the problems in the Conic Benchmark Format(CBF).

The main metric used to compare the algorithms is the time taken to solve the optimization problems. For each combination of parameters (which we describe in the following subsections), we generate $10$ random instances. Following Gally \cite{gally2018framework}, we use the \emph{shifted geometric time} to aggregate the times taken by each algorithm to solve the $10$ instances. The shifted geometric mean of values $y_1,\dots,y_n$ is defined as 

\[
\left (\prod_{j=1}^n(y_j+s)\right ) ^{\frac{1}{n}}-s
\]
where the shift $s$ was set to $10$. This aggregation metric intends to reduce the impact of easier instances. Each algorithm was given $1$ hour per instance, and $10$ random instances were generated for each configuration of parameters.

To solve the optimization sub-problems, we have used \texttt{MOSEK} 10.2.1 \cite{mosek} for semidefinite programs and \texttt{Gurobi} 11.0.2 \cite{optimization2020gurobi} for the mixed integer programs. Additionally, \texttt{SCIP} 4.3.0 was utilized in combination with \texttt{MOSEK} 10.2.1. The time limit for each experiment is set to 1 hour, and the thread limit is restricted to 1. All the other solver options are left at their default setting. The code used is available at
\url{https://github.com/SECQUOIA/Spectral-Outer-Approximation}.\footnote{The experiments were performed on a Linux cluster with 48 AMD EPYC 7643
2.3GHz CPUs and 1 TB RAM}

\subsection{Cardinality-constrained Boolean least squares}

We generate random instances of problem \eqref{BLS_SDP} by setting $b =0$ and taking $A$ to have either binary entries sampled uniformly and independently at random or entries sampled from the standard normal distribution. We set $A$ to be a $10 \times n$ matrix, and we vary the size of $n$. Observe that the size of the problem only depends on $n$. Finally, we set $k$ to be either $3$ or $5$ whenever $A$ has normally distributed entries and $k= 8,12 $ whenever $A$ has binary entries. These instances are considered in \cite{gally2016computing} and \cite{kobayashi2020branch} in their computations of restricted isometry constants. For each pair of $n$ and $k$, we randomly generate 10 instances and apply all the methods to solve them. We present the time performance of these methods in Figure \ref{fig:bls_normal} and \ref{fig:bls_bin}. The shifted geometric mean (SGM), with a shift value set to 10, is further analyzed in Tables \ref{tab:bls normal} and \ref{tab:bls binary} with time averaged over 10 instances. 
In these tables, ``\#Abort'' indicates the number of instances where the solver terminated unexpectedly, and ``\#Limit" means the number of instances where the solver reached the time limit.

The results of the cardinality-constrained Boolean least squares problem are consistent for both normally distributed entries and binary entries.
The \texttt{OA\_SOC} method with the basic formulation performs the best, even outperforming the \texttt{SCIPSDP} solver. Spectral cuts are only beneficial to the OA method in the \texttt{PAJARITO} solver, and the spectral cuts reduce the performance of all the other methods.

The results of the cardinality-constrained Boolean least squares problem are consistent for both normally distributed and binary entries. The \texttt{OA\_SOC} method, using the basic formulation, achieves the best performance, even surpassing even the \texttt{SCIPSDP} solver. Spectral cuts are only beneficial to the OA method in the \texttt{PAJARITO} solver, while for all other methods, spectral cuts tend to reduce performance. However, the OA method in the \texttt{PAJARITO} solver is not as good as the \texttt{OA\_SOC} method, even with the help of spectral cuts.
Additionally, the \texttt{LAZY\_SOC} method outperforms both the \texttt{KOB\_B\&C} and \texttt{KOB\_Cutting\_Plane} methods. For both \texttt{PAJARITO} and Kobayashi's algorithms, their single-tree implementations exhibit greater efficiency compared to their multi-tree counterparts.

\begin{figure}
    \centering
    \includegraphics[width=\textwidth]{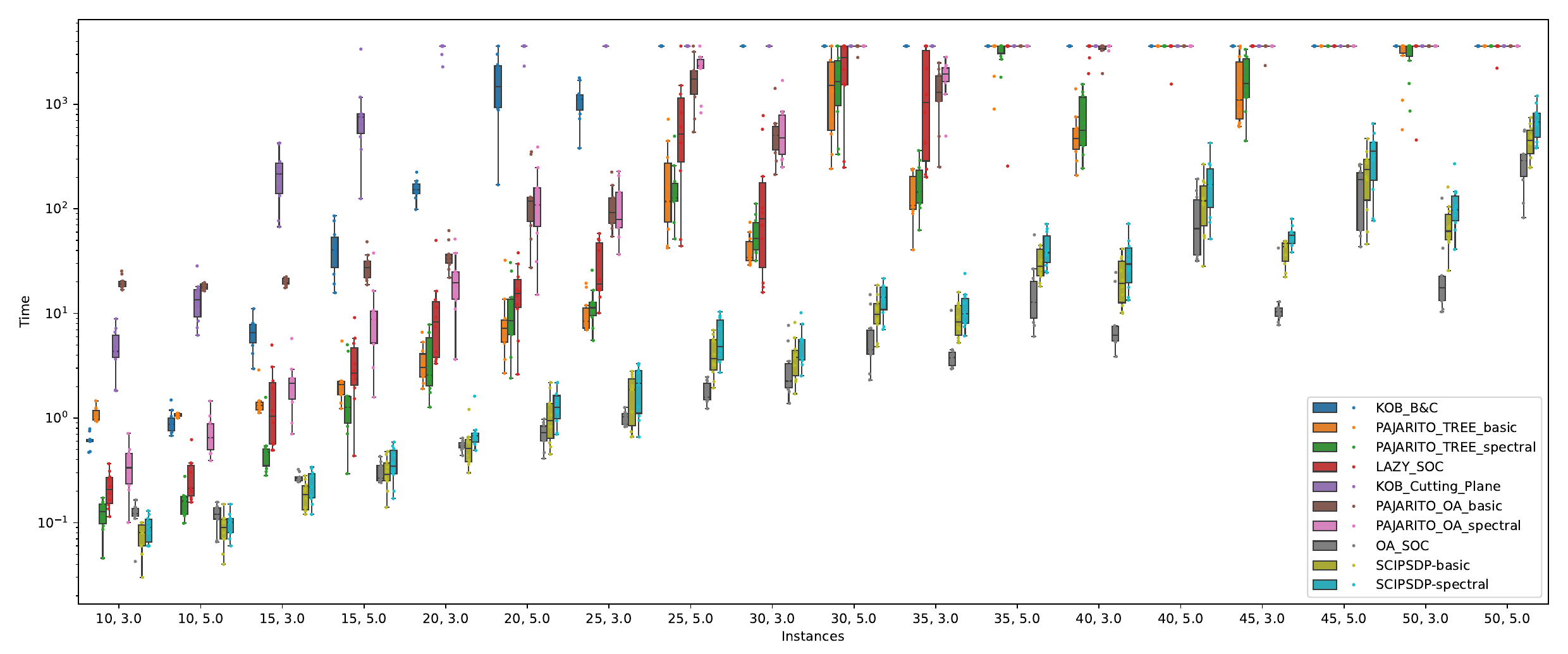}
    \caption{Time performance of different algorithms on the cardinality-constrained binary least squares problem with normal entries.}
    \label{fig:bls_normal}
\end{figure}

\begin{figure}
    \centering
    \includegraphics[width=\linewidth]{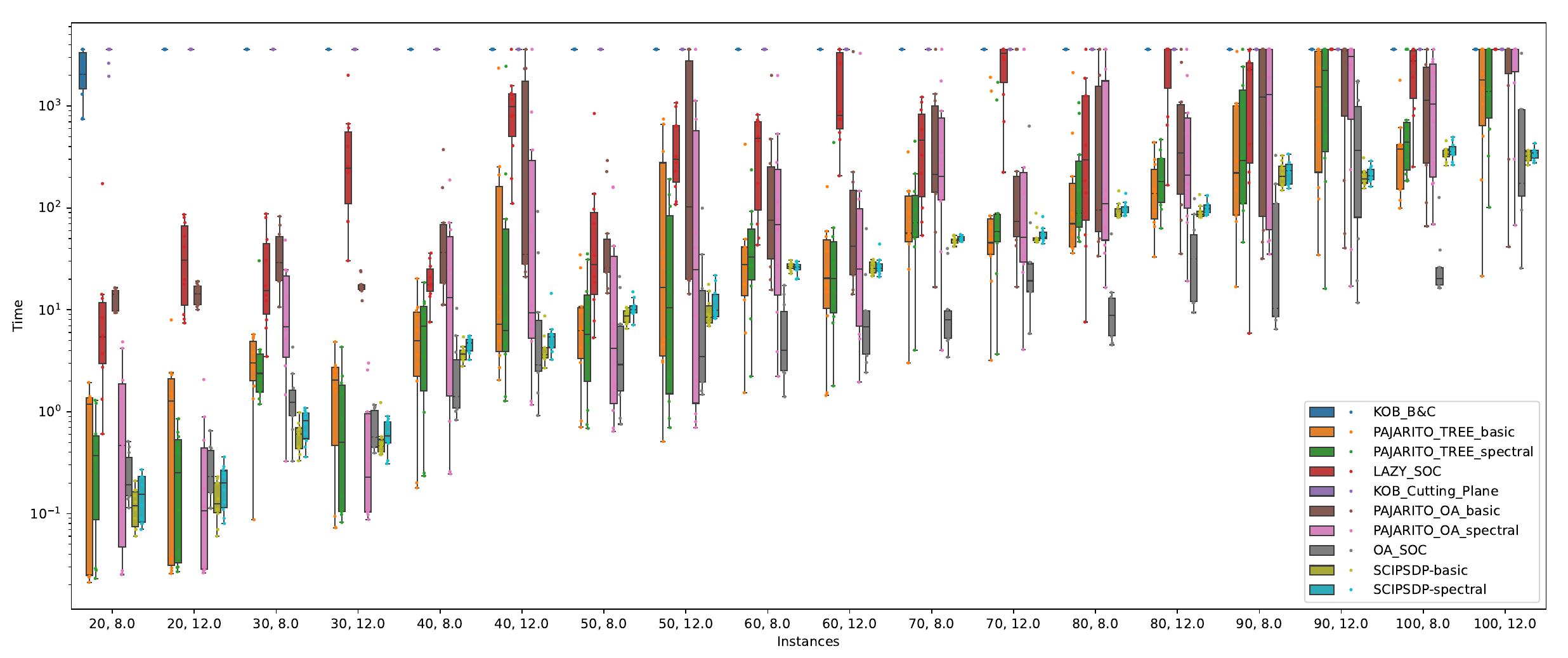}
    \caption{Time performance of different algorithms on the cardinality-constrained binary least squares problem with binary entries.}
    \label{fig:bls_bin}
\end{figure}

\begin{sidewaystable}
\centering
\caption{Performance of the different algorithms on the cardinality-constrained binary least squares problem with normally distributed entries.}
\label{tab:bls normal}
\resizebox{\textwidth}{!}{
\begin{tabular}{llllllllllll}
\toprule
 &  & \multicolumn{10}{c}{SGM (\#Abort, \#Limit)} \\
 & Algorithm & \texttt{KOB\_B\&C} & \texttt{KOB\_Cutting\_Plane} & \texttt{LAZY\_SOC} & \texttt{OA\_SOC} & \texttt{PAJARITO\_OA}-basic & \texttt{PAJARITO\_OA}-spectral & \texttt{PAJARITO\_TREE}-basic & \texttt{PAJARITO\_TREE}-spectral & \texttt{SCIPSDP}-basic & \texttt{SCIPSDP}-spectral \\
NumVar & Cardinality &  &  &  &  &  &  &  &  &  &  \\
\midrule
\multirow[t]{2}{*}{10} & 3 & 0.62 (0, 0) & 4.62 (0, 0) & 0.22 (0, 0) & 0.12 (0, 0) & 19.75 (0, 0) & 0.36 (0, 0) & 1.07 (0, 0) & 0.12 (0, 0) & 0.07 (0, 0) & 0.09 (0, 0) \\
 & 5 & 0.93 (0, 0) & 13.33 (0, 0) & 0.28 (0, 0) & 0.12 (0, 0) & 18.08 (0, 0) & 0.74 (0, 0) & 1.06 (0, 0) & 0.16 (0, 0) & 0.09 (0, 0) & 0.1 (0, 0) \\
\cline{1-12}
\multirow[t]{2}{*}{15} & 3 & 6.46 (0, 0) & 193.03 (0, 0) & 1.57 (0, 0) & 0.27 (0, 0) & 20.07 (0, 0) & 2.19 (0, 0) & 1.43 (0, 0) & 0.53 (0, 0) & 0.19 (0, 0) & 0.23 (0, 0) \\
 & 5 & 39.84 (0, 0) & 683.54 (0, 0) & 3.32 (0, 0) & 0.3 (0, 0) & 27.83 (0, 0) & 9.16 (0, 0) & 2.19 (0, 0) & 1.72 (0, 0) & 0.31 (0, 0) & 0.37 (0, 0) \\
\cline{1-12}
\multirow[t]{2}{*}{20} & 3 & 152.57 (0, 0) & 3375.69 (0, 8) & 9.81 (0, 0) & 0.55 (0, 0) & 34.98 (0, 0) & 19.61 (0, 0) & 3.4 (0, 0) & 3.55 (0, 0) & 0.55 (0, 0) & 0.74 (0, 0) \\
 & 5 & 1316.57 (0, 1) & 3444.74 (0, 9) & 15.12 (0, 0) & 0.71 (0, 0) & 112.6 (0, 0) & 101.66 (0, 0) & 8.26 (0, 0) & 10.32 (0, 0) & 1.1 (0, 0) & 1.34 (0, 0) \\
\cline{1-12}
\multirow[t]{2}{*}{25} & 3 & 1030.82 (0, 0) & 3600+ (0, 10) & 26.05 (0, 0) & 1.0 (0, 0) & 101.03 (0, 0) & 95.46 (0, 0) & 10.02 (0, 0) & 11.75 (0, 0) & 1.66 (0, 0) & 2.0 (0, 0) \\
 & 5 & 3600+ (0, 10) & 3600+ (0, 10) & 461.48 (0, 1) & 1.77 (0, 0) & 1585.07 (0, 1) & 2070.58 (0, 1) & 143.81 (0, 0) & 143.05 (0, 0) & 4.09 (0, 0) & 5.74 (0, 0) \\
\cline{1-12}
\multirow[t]{2}{*}{30} & 3 & 3600+ (0, 10) & 3600+ (0, 10) & 91.7 (0, 0) & 3.0 (0, 0) & 485.98 (0, 0) & 531.66 (0, 0) & 40.13 (0, 0) & 56.22 (0, 0) & 3.86 (0, 0) & 5.17 (0, 0) \\
 & 5 & 3600+ (0, 10) & 3600+ (0, 10) & 1716.24 (0, 4) & 5.87 (0, 0) & 3511.11 (0, 9) & 3600+ (0, 10) & 1177.74 (0, 2) & 1400.51 (0, 2) & 9.99 (0, 0) & 13.69 (0, 0) \\
\cline{1-12}
\multirow[t]{2}{*}{35} & 3 & 3600+ (0, 10) & 3600+ (0, 10) & 934.21 (0, 3) & 4.23 (0, 0) & 1155.55 (0, 0) & 1713.78 (0, 0) & 122.33 (0, 0) & 158.43 (0, 0) & 8.96 (0, 0) & 11.08 (0, 0) \\
 & 5 & 3600+ (0, 10) & 3600+ (0, 10) & 2771.71 (0, 9) & 15.5 (0, 0) & 3600+ (0, 10) & 3600+ (0, 10) & 2933.6 (0, 8) & 3194.13 (0, 7) & 29.5 (0, 0) & 40.58 (0, 0) \\
\cline{1-12}
\multirow[t]{2}{*}{40} & 3 & 3600+ (0, 10) & 3600+ (0, 10) & 3302.13 (0, 8) & 8.31 (0, 0) & 3326.78 (0, 7) & 3562.1 (0, 9) & 488.02 (0, 0) & 637.62 (0, 0) & 20.46 (0, 0) & 30.16 (0, 0) \\
 & 5 & 3600+ (0, 10) & 3600+ (0, 10) & 3311.5 (0, 9) & 68.92 (0, 0) & 3600+ (0, 10) & 3600+ (0, 10) & 3600+ (0, 10) & 3600+ (0, 10) & 105.97 (0, 0) & 157.81 (0, 0) \\
\cline{1-12}
\multirow[t]{2}{*}{45} & 3 & 3600+ (0, 10) & 3600+ (0, 10) & 3600+ (0, 10) & 12.07 (0, 0) & 3449.02 (0, 9) & 3600+ (0, 10) & 1315.35 (0, 1) & 1560.8 (0, 0) & 37.98 (0, 0) & 55.14 (0, 0) \\
 & 5 & 3600+ (0, 10) & 3600+ (0, 10) & 3600+ (0, 10) & 129.61 (0, 0) & 3600+ (0, 10) & 3600+ (0, 10) & 3600+ (0, 10) & 3600+ (0, 10) & 186.06 (0, 0) & 274.54 (0, 0) \\
\cline{1-12}
\multirow[t]{2}{*}{50} & 3 & 3600+ (0, 10) & 3600+ (0, 10) & 2931.34 (0, 9) & 22.49 (0, 0) & 3600+ (0, 10) & 3600+ (0, 10) & 2604.42 (0, 6) & 2785.48 (0, 7) & 65.87 (0, 0) & 100.46 (0, 0) \\
 & 5 & 3600+ (0, 10) & 3600+ (0, 10) & 3429.32 (0, 9) & 258.98 (0, 0) & 3600+ (0, 10) & 3600+ (0, 10) & 3600+ (0, 10) & 3600+ (0, 10) & 434.39 (0, 0) & 660.92 (0, 0) \\
\cline{1-12}
\bottomrule
\end{tabular}}
\end{sidewaystable}

\begin{sidewaystable}
\centering
\caption{Performance of the different algorithms on the cardinality-constrained binary least squares problem with binary entries.}
\label{tab:bls binary}
\resizebox{\textwidth}{!}{
\begin{tabular}{llllllllllll}
\toprule
 &  & \multicolumn{10}{r}{SGM (\#Abort, \#Limit)} \\
 & Algorithm & \texttt{KOB\_B\&C} & \texttt{KOB\_Cutting\_Plane} & \texttt{LAZY\_SOC} & \texttt{OA\_SOC} & \texttt{PAJARITO\_OA}-basic & \texttt{PAJARITO\_OA}-spectral & \texttt{PAJARITO\_TREE}-basic & \texttt{PAJARITO\_TREE}-spectral & \texttt{SCIPSDP}-basic & \texttt{SCIPSDP}-spectral \\
NumVar & Cardinality &  &  &  &  &  &  &  &  &  &  \\
\midrule
\multirow[t]{2}{*}{20} & 8 & 1935.09 (0, 3) & 3282.64 (0, 8) & 9.82 (0, 0) & 0.25 (0, 0) & 12.87 (0, 0) & 1.24 (0, 0) & 0.83 (0, 0) & 0.46 (0, 0) & 0.12 (0, 0) & 0.16 (0, 0) \\
 & 12 & 3600+ (0, 10) & 3600+ (0, 10) & 30.31 (0, 0) & 0.29 (0, 0) & 14.1 (0, 0) & 0.38 (0, 0) & 1.5 (0, 0) & 0.31 (0, 0) & 0.14 (0, 0) & 0.2 (0, 0) \\
\cline{1-12}
\multirow[t]{2}{*}{30} & 8 & 3600+ (0, 10) & 3600+ (0, 10) & 22.3 (0, 0) & 1.46 (0, 0) & 32.81 (0, 0) & 10.02 (0, 0) & 3.1 (0, 0) & 3.9 (0, 0) & 0.59 (0, 0) & 0.76 (0, 0) \\
 & 12 & 3600+ (0, 10) & 3600+ (0, 10) & 250.75 (0, 0) & 0.7 (0, 0) & 17.27 (0, 0) & 0.78 (0, 0) & 1.8 (0, 0) & 1.06 (0, 0) & 0.53 (0, 0) & 0.61 (0, 0) \\
\cline{1-12}
\multirow[t]{2}{*}{40} & 8 & 3600+ (0, 10) & 3600+ (0, 10) & 19.28 (0, 0) & 2.55 (0, 0) & 45.68 (0, 0) & 18.26 (0, 0) & 5.52 (0, 0) & 6.05 (0, 0) & 3.74 (0, 0) & 4.55 (0, 0) \\
 & 12 & 3600+ (0, 10) & 3600+ (0, 10) & 766.49 (0, 1) & 8.18 (0, 0) & 133.19 (0, 1) & 53.14 (0, 1) & 34.31 (0, 0) & 29.14 (0, 0) & 4.19 (0, 0) & 5.68 (0, 0) \\
\cline{1-12}
\multirow[t]{2}{*}{50} & 8 & 3600+ (0, 10) & 3600+ (0, 10) & 45.18 (0, 0) & 5.09 (0, 0) & 42.62 (0, 0) & 14.4 (0, 0) & 8.07 (0, 0) & 8.43 (0, 0) & 8.61 (0, 0) & 10.4 (0, 0) \\
 & 12 & 3600+ (0, 10) & 3600+ (0, 10) & 338.05 (0, 0) & 9.99 (0, 0) & 177.77 (0, 3) & 60.35 (0, 1) & 41.51 (0, 0) & 20.52 (0, 0) & 9.68 (0, 0) & 11.8 (0, 0) \\
\cline{1-12}
\multirow[t]{2}{*}{60} & 8 & 3600+ (0, 10) & 3600+ (0, 10) & 273.0 (0, 0) & 5.75 (0, 0) & 109.66 (0, 0) & 78.55 (0, 0) & 30.18 (0, 0) & 35.79 (0, 0) & 26.6 (0, 0) & 25.6 (0, 0) \\
 & 12 & 3600+ (0, 10) & 3600+ (0, 10) & 1126.34 (0, 3) & 9.45 (0, 0) & 74.6 (0, 0) & 43.14 (0, 0) & 23.87 (0, 0) & 27.84 (0, 0) & 25.84 (0, 0) & 26.92 (0, 0) \\
\cline{1-12}
\multirow[t]{2}{*}{70} & 8 & 3600+ (0, 10) & 3600+ (0, 10) & 329.35 (0, 0) & 10.52 (0, 0) & 296.73 (0, 1) & 241.28 (0, 1) & 65.53 (0, 0) & 79.32 (0, 0) & 47.83 (0, 0) & 50.17 (0, 0) \\
 & 12 & 3600+ (0, 10) & 3600+ (0, 10) & 2016.43 (0, 5) & 29.85 (0, 0) & 157.94 (0, 2) & 120.96 (0, 2) & 82.69 (0, 0) & 92.39 (0, 0) & 53.09 (0, 0) & 54.78 (0, 0) \\
\cline{1-12}
\multirow[t]{2}{*}{80} & 8 & 3600+ (0, 10) & 3600+ (0, 10) & 299.29 (0, 1) & 10.82 (0, 0) & 241.59 (0, 2) & 214.53 (0, 2) & 114.99 (0, 0) & 149.26 (0, 0) & 93.03 (0, 0) & 97.04 (0, 0) \\
 & 12 & 3600+ (0, 10) & 3600+ (0, 10) & 1927.06 (0, 7) & 31.5 (0, 0) & 394.85 (0, 1) & 286.21 (0, 1) & 133.87 (0, 0) & 186.58 (0, 0) & 91.3 (0, 0) & 101.33 (0, 0) \\
\cline{1-12}
\multirow[t]{2}{*}{90} & 8 & 3600+ (0, 10) & 3600+ (0, 10) & 804.34 (0, 1) & 35.26 (0, 0) & 566.42 (0, 4) & 561.08 (0, 4) & 276.9 (0, 0) & 391.78 (0, 1) & 211.44 (0, 0) & 219.96 (0, 0) \\
 & 12 & 3600+ (0, 10) & 3600+ (0, 10) & 3600+ (0, 10) & 253.1 (0, 0) & 1125.75 (0, 5) & 1003.52 (0, 5) & 791.5 (0, 2) & 921.31 (0, 3) & 200.45 (0, 0) & 212.12 (0, 0) \\
\cline{1-12}
\multirow[t]{2}{*}{100} & 8 & 3600+ (0, 10) & 3600+ (0, 10) & 1829.16 (0, 3) & 26.38 (0, 0) & 741.5 (0, 2) & 726.15 (0, 1) & 310.66 (0, 0) & 550.73 (0, 2) & 339.35 (0, 0) & 360.34 (0, 0) \\
 & 12 & 3600+ (0, 10) & 3600+ (0, 10) & 3600+ (0, 7) & 310.01 (0, 0) & 1692.22 (0, 7) & 1774.35 (0, 7) & 1051.71 (0, 4) & 1244.98 (0, 4) & 319.08 (0, 0) & 339.83 (0, 0) \\
\cline{1-12}
\bottomrule
\end{tabular}}
\end{sidewaystable}

\subsection{Quadratic knapsack problem}

We generate random instances of the quadratic knapsack problem following \cite{pisinger2007quadratic}, which specify instances that have become the standard to test this optimization problem computationally. That is, we first set a \emph{density} value $\Delta = 0.8$, which corresponds to the percentage of nonzero elements of the matrix $C$. Each weight $w_j , \ j \in [n]$ is uniformly randomly distributed in $[1,50]$. The entry $ij$ of $C$ is equal to the entry $ji$ and is nonzero with probability $\Delta$, in which case it is uniformly distributed in $[1,100], \ i,j \in [n]$. The capacity $c$ of the knapsack is fixed at $\frac{1}{2}\sum_{j=1}^n w_j$. We randomly generate 10 instances for each setting and apply all the methods to solve them. The results are presented in Figure \ref{fig:knapsack} and in Table \ref{tab:knapsack}.

For the quadratic knapsack problem, \texttt{SCIPSDP} with the basic formulation performs the best among all tested methods, while adding spectral cuts reduces its efficiency. Additionally, the \texttt{OA\_SOC} method consistently outperforms the OA method in the \texttt{PAJARITO} solver, regardless of whether spectral cuts are included. Spectral cuts provide a performance boost to both single-tree and multi-tree implementations in the \texttt{PAJARITO} solver. The \texttt{LAZY\_SOC} method also demonstrates superior performance compared to the \texttt{KOB\_B\&C} and \texttt{KOB\_Cutting\_Plane} methods. Overall, single-tree implementations are generally more efficient than their multi-tree counterparts, confirming their advantage across various methods.

\begin{figure}
    \centering
    \includegraphics[width=\textwidth]{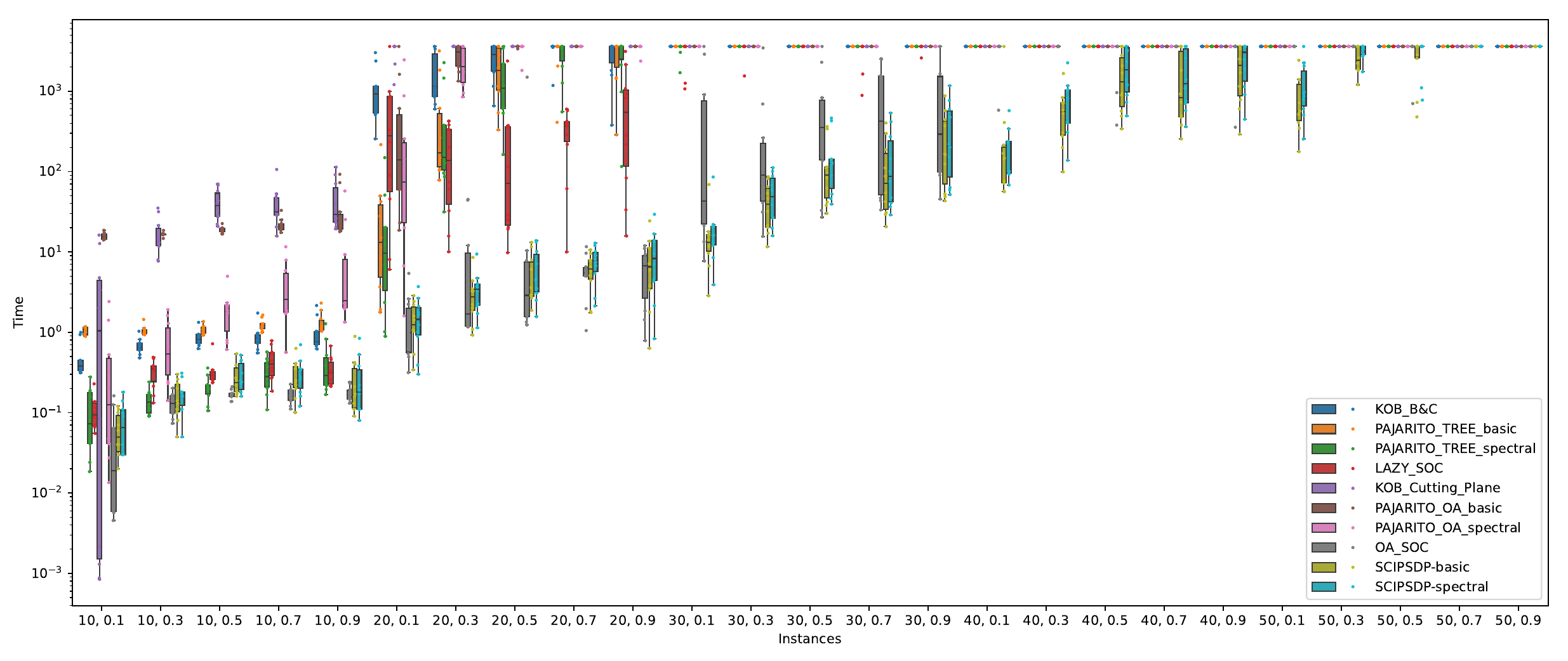}
    \caption{Time performance of different algorithms on the quadratic knapsack problem}
    \label{fig:knapsack}
\end{figure}

\begin{sidewaystable}
\caption{Performance of the different algorithms on the quadratic knapsack problem.}
\label{tab:knapsack}
\resizebox{\textwidth}{!}{
\begin{tabular}{llllllllllll}
\toprule
 &  & \multicolumn{10}{c}{SGM (\#Abort, \#Limit)} \\
 & Algorithm & \texttt{KOB\_B\&C} & \texttt{KOB\_Cutting\_Plane} & \texttt{LAZY\_SOC} & \texttt{OA\_SOC} & \texttt{PAJARITO\_OA}-basic & \texttt{PAJARITO\_OA}-spectral & \texttt{PAJARITO\_TREE}-basic & \texttt{PAJARITO\_TREE}-spectral & \texttt{SCIPSDP}-basic & \texttt{SCIPSDP}-spectral \\
NumVar & Density &  &  &  &  &  &  &  &  &  &  \\
\midrule
\multirow[t]{5}{*}{10} & 0.1 & 0.48 (0, 0) & 3.04 (0, 0) & 0.11 (0, 0) & 0.05 (0, 0) & 15.73 (0, 0) & 0.48 (0, 0) & 1.02 (0, 0) & 0.11 (0, 0) & 0.06 (0, 0) & 0.08 (0, 0) \\
 & 0.3 & 0.68 (0, 0) & 15.7 (0, 0) & 0.33 (0, 0) & 0.13 (0, 0) & 16.6 (0, 0) & 0.75 (0, 0) & 1.04 (0, 0) & 0.14 (0, 0) & 0.16 (0, 0) & 0.17 (0, 0) \\
 & 0.5 & 0.85 (0, 0) & 38.47 (0, 0) & 0.33 (0, 0) & 0.17 (0, 0) & 18.88 (0, 0) & 1.86 (0, 0) & 1.07 (0, 0) & 0.21 (0, 0) & 0.28 (0, 0) & 0.3 (0, 0) \\
 & 0.7 & 0.88 (0, 0) & 36.39 (0, 0) & 0.44 (0, 0) & 0.16 (0, 0) & 21.54 (0, 0) & 3.71 (0, 0) & 1.24 (0, 0) & 0.31 (0, 0) & 0.29 (0, 0) & 0.31 (0, 0) \\
 & 0.9 & 1.0 (0, 0) & 39.12 (0, 0) & 0.35 (0, 0) & 0.17 (0, 0) & 28.67 (0, 0) & 6.99 (0, 0) & 1.33 (0, 0) & 0.43 (0, 0) & 0.27 (0, 0) & 0.27 (0, 0) \\
\cline{1-12}
\multirow[t]{5}{*}{20} & 0.1 & 888.49 (0, 0) & 3069.42 (0, 8) & 216.03 (0, 1) & 1.46 (0, 0) & 185.21 (0, 1) & 94.06 (0, 0) & 19.53 (0, 0) & 14.53 (0, 0) & 1.43 (0, 0) & 1.53 (0, 0) \\
 & 0.3 & 1394.94 (0, 2) & 3600+ (0, 10) & 108.41 (0, 0) & 6.73 (0, 0) & 2624.21 (0, 4) & 1948.66 (0, 3) & 287.38 (0, 0) & 232.29 (0, 0) & 2.95 (0, 0) & 3.44 (0, 0) \\
 & 0.5 & 2244.16 (0, 5) & 3600+ (0, 10) & 103.81 (0, 0) & 11.89 (0, 0) & 3575.3 (0, 9) & 3361.46 (0, 9) & 1568.73 (0, 3) & 1087.55 (0, 1) & 5.2 (0, 0) & 5.89 (0, 0) \\
 & 0.7 & 3212.19 (0, 8) & 3600+ (0, 10) & 237.3 (0, 0) & 5.48 (0, 0) & 3600+ (0, 10) & 3600+ (0, 10) & 2742.44 (0, 8) & 2510.09 (0, 4) & 5.87 (0, 0) & 7.3 (0, 0) \\
 & 0.9 & 2475.14 (0, 7) & 3600+ (0, 10) & 352.36 (0, 0) & 5.82 (0, 0) & 3600+ (0, 10) & 3452.18 (0, 9) & 2337.15 (0, 7) & 2141.15 (0, 7) & 7.21 (0, 0) & 8.77 (0, 0) \\
\cline{1-12}
\multirow[t]{5}{*}{30} & 0.1 & 3600+ (0, 10) & 3600+ (0, 10) & 2870.81 (0, 8) & 134.85 (0, 1) & 3600+ (0, 10) & 3600+ (0, 10) & 3600+ (0, 10) & 3284.03 (0, 8) & 14.38 (0, 0) & 17.57 (0, 0) \\
 & 0.3 & 3600+ (0, 10) & 3600+ (0, 10) & 3308.75 (0, 9) & 127.82 (0, 0) & 3600+ (0, 10) & 3600+ (0, 10) & 3600+ (0, 10) & 3600+ (0, 10) & 37.16 (0, 0) & 46.84 (0, 0) \\
 & 0.5 & 3600+ (0, 10) & 3600+ (0, 10) & 3600+ (0, 10) & 320.05 (0, 1) & 3600+ (0, 10) & 3600+ (0, 10) & 3600+ (0, 10) & 3600+ (0, 10) & 93.46 (0, 0) & 120.96 (0, 0) \\
 & 0.7 & 3600+ (0, 10) & 3600+ (0, 10) & 2892.94 (0, 8) & 302.23 (0, 0) & 3600+ (0, 10) & 3600+ (0, 10) & 3600+ (0, 10) & 3600+ (0, 10) & 83.09 (0, 0) & 107.38 (0, 0) \\
 & 0.9 & 3600+ (0, 10) & 3600+ (0, 10) & 3483.75 (0, 9) & 391.96 (0, 2) & 3600+ (0, 10) & 3600+ (0, 10) & 3600+ (0, 10) & 3600+ (0, 10) & 182.71 (0, 0) & 230.71 (0, 0) \\
\cline{1-12}
\multirow[t]{5}{*}{40} & 0.1 & 3600+ (0, 10) & 3600+ (0, 10) & 3600+ (0, 10) & 3001.65 (0, 9) & 3600+ (0, 10) & 3600+ (0, 10) & 3600+ (0, 10) & 3600+ (0, 10) & 174.31 (1, 0) & 175.7 (0, 0) \\
 & 0.3 & 3600+ (0, 10) & 3600+ (0, 10) & 3600+ (0, 10) & 3600+ (0, 10) & 3600+ (0, 10) & 3600+ (0, 10) & 3600+ (0, 10) & 3600+ (0, 10) & 455.87 (0, 0) & 651.57 (0, 0) \\
 & 0.5 & 3600+ (0, 10) & 3600+ (0, 10) & 3600+ (0, 10) & 2522.9 (0, 8) & 3600+ (0, 10) & 3600+ (0, 10) & 3600+ (0, 10) & 3600+ (0, 10) & 1261.01 (0, 2) & 1689.45 (0, 3) \\
 & 0.7 & 3600+ (0, 10) & 3600+ (0, 10) & 3600+ (0, 10) & 3600+ (0, 10) & 3600+ (0, 10) & 3600+ (0, 10) & 3600+ (0, 10) & 3600+ (0, 10) & 1030.8 (0, 3) & 1350.7 (0, 3) \\
 & 0.9 & 3600+ (0, 10) & 3600+ (0, 10) & 3600+ (0, 10) & 2861.18 (0, 9) & 3600+ (0, 10) & 3600+ (0, 10) & 3600+ (0, 10) & 3600+ (0, 10) & 1598.22 (0, 3) & 2054.17 (0, 5) \\
\cline{1-12}
\multirow[t]{5}{*}{50} & 0.1 & 3600+ (0, 10) & 3600+ (0, 10) & 3600+ (0, 10) & 3600+ (0, 10) & 3600+ (0, 10) & 3600+ (0, 10) & 3600+ (0, 10) & 3600+ (0, 10) & 653.32 (0, 0) & 978.33 (1, 0) \\
 & 0.3 & 3600+ (0, 10) & 3600+ (0, 10) & 3600+ (0, 10) & 3600+ (0, 10) & 3600+ (0, 10) & 3600+ (0, 10) & 3600+ (0, 10) & 3600+ (0, 10) & 2438.64 (0, 4) & 3087.49 (1, 4) \\
 & 0.5 & 3600+ (0, 10) & 3600+ (0, 10) & 3600+ (0, 10) & 3058.57 (0, 9) & 3600+ (0, 10) & 3600+ (0, 10) & 3600+ (0, 10) & 3600+ (0, 10) & 2368.99 (0, 6) & 2744.46 (0, 8) \\
 & 0.7 & 3600+ (0, 10) & 3600+ (0, 10) & 3600+ (0, 10) & 3600+ (0, 10) & 3600+ (0, 10) & 3600+ (0, 10) & 3600+ (0, 10) & 3600+ (0, 10) & 3600+ (0, 10) & 3600+ (0, 10) \\
 & 0.9 & 3600+ (0, 10) & 3600+ (0, 10) & 3600+ (0, 10) & 3600+ (0, 10) & 3600+ (0, 10) & 3600+ (0, 10) & 3600+ (0, 10) & 3600+ (0, 10) & 3600+ (0, 10) & 3600+ (0, 10) \\
\cline{1-12}
\bottomrule
\end{tabular}}
\end{sidewaystable}

\section{Conclusion and future work}\label{oa_sec:6}

In this work, we have proposed two algorithms based on spectral decomposition to improve outer approximation algorithms for integer semidefinite programs whenever they are derived from binary QCQPs. The experiments evaluating our approach are promising and seem to indicate that integer semidefinite programming is a strong candidate for solving binary QCQPs, competing with state-of-the-art global solvers.

In a sense, these algorithms have been implemented in their most basic form, and we believe they can be improved using the Branch-and-Bound ideas of \citet{coey2020outer}. 
Our strengthenings were based on adding cuts derived from eigenvectors of a certain matrix, which simultaneously diagonalizes the matrix, determining the objective of an ISDP and aggregating the constraint matrices. We believe more progress can be made in this direction and that different alternatives to this approach could be found. 

Copositive and completely positive programming are also alternatives to solve BQCQPs. Although some algorithms that inner and outer approximate these cones have been proposed, such as the algorithm of \citet{bundfuss2009adaptive} 
and the recent work of \citet{gouveia2020inner}, the size of problems that can be solved remains limited. It would be interesting to find families of copositive and completely positive problems that can be cast as integer QCQPs and, therefore, could be tackled with integer semidefinite optimization problems, providing a new way to approach these hard conic problems.

\section*{Acknowledgements}
D.B.N. and Z.P. acknowledge the support of the startup grant of the Davidson School of Chemical Engineering at Purdue University.

\bibliographystyle{unsrtnat}
\bibliography{lit} 

\end{document}